\documentclass[11pt,reqno]{amsart}
\usepackage[utf8]{inputenc}
\usepackage{amsmath, amsfonts, amsthm, amssymb}
\usepackage[margin=1in]{geometry}
\usepackage{mathtools}
\usepackage{stix}
\usepackage{float}
\usepackage{hyperref}

\newtagform{brackets}[\textbf]{$\langle$}{$\rangle$}
\usetagform{brackets}

\allowdisplaybreaks[4]

\newcommand{\gte}{\geqslant}
\newcommand{\lte}{\leqslant}

\newcommand\fsk[2]{F_{#1,#2}}
\newcommand\pp{\,.}
\newcommand\pc{\,,}
\newcommand\dcs{\,\,}

\newcommand\sms[2]{#1^{(#2)}}
\newcommand{\mmod}[1]{\ (\mathrm{mod}\ #1)}
\newcommand\LastS{200}

\makeatletter
\newcommand{\oset}[3][0ex]{%
  \mathrel{\mathop{#3}\limits^{
    \vbox to#1{\kern-2\ex@
    \hbox{$\scriptstyle#2$}\vss}}}}
\makeatother
\newcommand\eqs[2][0ex]{\oset[#1]{#2}{\sim}}

\newcommand\conj[1]{\oset{#1}{\leftrightarrow}}

\newcommand\mbr{\mathbb R}
\newcommand\mbz{\mathbb Z}
\newcommand\mbn{\mathbb N}
\newcommand\mcm{\mathcal M}
\newcommand\mcf{\mathcal F}
\newcommand\mcp{\mathcal P}

\newcommand\MLAB{\textsc{Matlab}\texttrademark}
\newcommand\SAGE{\textsc{Sage}}

\newcommand\vfs{\vphantom{\binom ns}}
\newcommand{\phn}{\vphantom{$1^{1^1}$}}

\newtheorem{theorem}{Theorem}[section]

\newtheorem{corollary}[theorem]{Corollary}
\newtheorem{prop}[theorem]{Proposition}
\newtheorem{problem}[theorem]{Problem}
\newtheorem{question}[theorem]{Question}
\newtheorem{conjecture}[theorem]{Conjecture}

\newtheorem*{definition}{Definition}
\newtheorem*{notation}{Notation}

\begin{document}

\title[Is the Multiset of $n$ Integers Uniquely Determined by the Multiset of Its $s$-sums?]{Is the Multiset of $n$ Integers Uniquely Determined \\  by the Multiset of Its $s$-sums?}
\author{Dmitri V.~Fomin}
\address{Boston, USA}
\email{fomin@hotmail.com}
\date{\today}
\keywords{integer multisets, multiset recovery, sumsets, symmetric polynomials}
\subjclass[2010]{Primary: 11B75 ; Secondary: 11P70, 05A15}
\dedicatory{This paper is dedicated to the memory of Oleg Izhboldin (1963-2000)}

\begin{abstract}
This is a survey of all available information on a remarkable problem in number theory proposed by Leo Moser in 1957. In general form the question is: can a collection of $n$ numbers be uniquely restored given the collection of its $s$-sums? We describe results and techniques from sixty years of research in this area. Some new findings and open questions are presented.
\end{abstract}

\maketitle

\section{Introduction}
\label{sec:Intro}

Sixty years ago, in 1957, American Mathematical Monthly published the following relatively simple problem in number theory proposed by Leo Moser (see \cite{Mos}):

\begin{problem}
\label{th:prob_M2}
(a) The ten numbers $s_1 \lte s_2 \lte \cdots \lte s_{10}$ are the sums of the five unknown numbers $x_1 \lte x_2 \lte \cdots \lte x_5$ taken two at a time. Determine the $x$'s in terms of the $s$'s.

(b) Show that if $s_1 < s_2 < \cdots < s_6$ are six distinct numbers formed by taking the sums of four numbers two at a time, then there exist four other numbers which give the same sums when added in pairs.
\end{problem}

Naturally---after it was quickly solved---the problem was immediately generalized and reformulated. And then it turned out to be quite an interesting little question\dots

\begin{problem}
\label{th:prob_M2G}
Let $A$ be a collection (multiset) of $n$ numbers $a_1\lte a_2\lte\cdots \lte a_n$. Consider the multiset $\sms A2$ of $\binom{n}2$ $2$-sums of multiset $A$, i.e., collection of all sums of the kind $a_{i_1}+a_{i_2}$, where $1\lte i_1 <i_2 \lte n$. Is it possible to restore $A$ given $\sms A2$?
\end{problem}

The numbers here could be complex or even belong to an arbitrary field of characteristic zero. That really doesn't matter as we will learn shortly.

In this generalization the problem asks whether a multiset is uniquely determined by (or can be recovered from) the multiset of its $2$-sums.

It was later presented in the literature (e.g., see \cite{BomBolOne}) using somewhat different terminology. 

\begin{problem}
\label{th:prob_M2N}
A malicious farmer's apprentice was asked to provide the list of weights of $n$ bags of grain. Instead he weighed them two at a time and recorded all $n(n - 1)/2$ combined weights written down in some random order. Is it possible to find the weights of bags (up to permutation of bags)?
\end{problem}

By the way, it seems that the apprentice was not only malicious but also somewhat dense---instead of performing only $n$ weighings he did a whole lot more of them.

Again the problem is posed as a ``recovery'' question---whether an unknown multiset can be uniquely restored from the multiset of its pairwise sums. 

Now, each interesting question, theorem or conjecture deserves a nice name that easily rolls off the tongue in lectures and discussions. ``\textit{Fermat's Last Theorem}'', ``\textit{Riemann Hypothesis}'', ``$P=NP$'', ``\textit{Collatz $3k+1$ Conjecture}''---all these names are short and to the point. I submit that ``\textit{Multiset Recovery Problem}'' sounds just as neat while describing the issue with decent precision.

The original problem \textbf{\ref{th:prob_M2}} was indeed quite easy. However, its generalization \textbf{\ref{th:prob_M2G}} was not. Still, it did not present a serious obstacle; the answer was quickly discovered and so the problem was generalized even further.

\begin{notation}
For any pair of positive integers $n$ and $s$ such that $n\gte s$ we will denote by $\sms As$ the multiset of $s$-sums of $A$, i.e. collection of all sums of the kind
$$
a_{i_1}+a_{i_2}+\ldots+a_{i_s}\pc
$$
where $1\lte i_1 < i_2 < \ldots < i_s \lte n$.
\end{notation}

\begin{problem}
\label{th:prob_MNK} Consider positive integers $n$ and $s$ with $n > s$. Do there exist two distinct $n$-multisets $A$ and $B$ such that $\sms As = \sms Bs$?
\end{problem}

Such a pair of multisets would represent a "recovery failure". Indeed, in this case, given multiset $M = \sms As = \sms Bs$ it is impossible to determine the original multiset.

\begin{definition}
If two multisets $A$ and $B$ have the same collections of $s$-sums---that is, if $\sms As = \sms Bs$ ---then we will call these multisets \textbf{$s$-equivalent} (or when this will not cause any confusion, simply equivalent) and this relation will be denoted as $A \eqs s B$ (or simply $A\sim B$).
\end{definition}

\begin{definition}
We will call a pair of natural numbers $(n,s)$ \textbf{singular} if it represents a nontrivial ``multiset recovery failure''---i.e., $n>s$ and there exist two different $s$-equivalent $n$-multisets $A$ and $B$ ($A\neq B\dcs\&\dcs A \eqs s B$).
\end{definition}

So all the above problems can be reworded as questions about singular pairs. The ultimate goal is to describe those pairs in some easily ``computable'' way.

\begin{notation}
For any natural number $s$ by $\mcm_s$ we will denote the set of all natural numbers $n>s$ such that pair $(n,s)$ is singular.
\end{notation}

For instance, $\mcm_1$ is obviously empty. Also \textbf{Question \ref{th:prob_M2}} could be reformulated as follows: does $\mcm_2$ contain numbers 5 and 4? (Actually, as far as the second half of that question goes, this is not an entirely precise reformulation, but let's not nitpick).

\smallskip

This article will present all currently known results on the Multiset Recovery Problem and the methods involved. We will also discuss some new facts and conjectures.

\section{Historical timeline}
\label{sec:Timeline}

\subsection*{(1957)}

Just a few words about Moser's possible motivation for this problem. A few years before that, in 1954, Leo Moser and Jim Lambek published article \cite{LamMos} about pairs of complementary subsets of $\mbn$ where they have proved \textbf{Lambek-Moser Theorem} about partitions of $\mbn$ and how they are related to sequences of numbers of the form $\{f(n)+n\}$ where $f:\mbn\to\mbz_{\gte0}$ is some arbitrary nondecreasing unbounded function.

This investigation seems to be quite close to questions about how finite or infinite sets of natural numbers overlap or complement each other when being translated.

I suspect that this was how Leo Moser stumbled upon questions about multiset recovery for the case of $s=2$---but of course, this is pure speculation. However, in their later short article \cite{LamMos2} Lambek and Moser mention both Multiset Recovery Problem and complementary sequences of integers literally on the same page.

\subsection*{(1958)}

In 1958, almost immediately after Moser has posed his original question, Selfridge and Straus in their article \cite{SelStr} provided a solution for the ``real'' problem \textbf{\ref{th:prob_M2G}} as well as some other questions. First, they have proved the following.

\begin{theorem}
Multiset $A$ of $n$ numbers is uniquely determined by multiset of its $2$-sums $\sms A2$ if and only if $n$ is not a power of 2.
\end{theorem}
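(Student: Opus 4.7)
The plan is to encode each multiset as a generating function and reduce the problem to a polynomial identity. For a multiset $C=\{c_1,\ldots,c_n\}$ set $f_C(x)=\sum_i x^{c_i}$. Squaring gives $f_C(x)^2=f_C(x^2)+2\sum_{i<j}x^{c_i+c_j}$, so the multiset $\sms C2$ is fully determined by $f_C(x)^2-f_C(x^2)$. Thus $A\sim B$ (both of size $n$) is equivalent to $f_A(x)^2-f_B(x)^2=f_A(x^2)-f_B(x^2)$, which factors as
\[
 \bigl(f_A(x)-f_B(x)\bigr)\bigl(f_A(x)+f_B(x)\bigr)=f_A(x^2)-f_B(x^2).
\]
Writing $g=f_A-f_B$ and $h=f_A+f_B$, the problem becomes the study of pairs of polynomials with $gh=g(x^2)$, where $h$ has nonnegative integer coefficients summing to $2n$ and $g$ has integer coefficients with $g(1)=0$.

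For the necessity direction (uniqueness fails $\Rightarrow n$ is a power of $2$), I would suppose $A\neq B$ and $A\sim B$, so $g\not\equiv 0$ but $g(1)=n-n=0$. Let $k\geq 1$ be the order of the zero of $g$ at $x=1$, and write $g(x)=(x-1)^k p(x)$ with $p(1)\neq 0$. Substituting into $g(x)h(x)=g(x^2)$ and using $(x^2-1)^k=(x-1)^k(x+1)^k$, cancel $(x-1)^k$ to obtain
\[
 p(x)\,h(x)=(x+1)^k\,p(x^2).
\]
Now evaluate at $x=1$: this yields $p(1)\,h(1)=2^k p(1)$, hence $h(1)=2^k$. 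But $h(1)=f_A(1)+f_B(1)=2n$, so $n=2^{k-1}$ is a power of $2$.

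For the sufficiency direction ($n=2^k\Rightarrow$ uniqueness fails), I would exhibit an explicit counterexample via the Thue--Morse partition. Set
\[
 A_k=\{\,i\in\{0,1,\ldots,2^{k+1}-1\}:s_2(i)\text{ is even}\,\},\qquad B_k=\{\,i\in\{0,1,\ldots,2^{k+1}-1\}:s_2(i)\text{ is odd}\,\},
\]
where $s_2(i)$ denotes the binary digit sum of $i$. A quick induction on $k$ shows $|A_k|=|B_k|=2^k$ and
\[
 f_{A_k}(x)-f_{B_k}(x)=\prod_{i=0}^{k}(1-x^{2^i}),\qquad f_{A_k}(x)+f_{B_k}(x)=\prod_{i=0}^{k}(1+x^{2^i}).
\]
The identity $gh=g(x^2)$ is then immediate, since both products collapse to $\prod_{i=0}^{k}(1-x^{2^{i+1}})$. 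Hence $A_k\eqs 2 B_k$ while $A_k\neq B_k$, showing that every $n=2^k$ is singular for $s=2$.

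The main obstacle is really just spotting the right algebraic reformulation in the first step; once the identity $g(x)h(x)=g(x^2)$ is in hand, the necessity argument is a one-line local computation at $x=1$, and the sufficiency direction reduces to producing the telescoping Thue--Morse factorization (whose verification, while classical, requires some care with signs and with the inductive step relating level $k$ to level $k+1$ via the substitution $x\mapsto x^{2^{k+1}}$).
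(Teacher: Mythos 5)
Your proof is correct, and it takes a genuinely different (though secretly related) route from the one assembled in this survey. For the direction ``non-recovery $\Rightarrow$ $n=2^k$'' the paper works through power sums: $\sigma_k(\sms A2)=(n-2^{k-1})\sigma_k(A)+\mathcal P(\sigma_1(A),\ldots,\sigma_{k-1}(A))$, so all $\sigma_k(A)$, and hence $A$, are recovered inductively whenever $n\neq 2^{k-1}$ for every $k$. You instead use the factorization $g(x)h(x)=g(x^2)$ and read off $2n=h(1)=2^k$ from the order $k$ of the zero of $g=f_A-f_B$ at $x=1$; this is the classical ``gem'' argument (Honsberger, Savchev--Andreescu), and it is really the same computation viewed locally at the first index where $\sigma_k(A)\neq\sigma_k(B)$, but it is shorter and avoids introducing the coefficients $\alpha_{2,k,n}$ --- at the cost of not generalizing to $s>2$ the way the Moser-polynomial machinery does. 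For the converse you use the Thue--Morse partition of $\{0,\ldots,2^{k+1}-1\}$, which the paper presents in its 1959 section, whereas Section~\ref{sec:Simple_Examples} reaches the same goal by the doubling construction $A\cup(B+d)\eqs[0.2ex]2 B\cup(A+d)$; both are fine, and your telescoping verification of $gh=g(x^2)$ is complete. One caveat: the theorem is stated for arbitrary ``numbers'' (the paper allows any field of characteristic zero), while your necessity argument presumes the elements are nonnegative integers so that $f_A,f_B$ are polynomials. You should either invoke the reduction to $\mbz$ (due to Gordon, Fraenkel, and Straus, cited in the paper) together with a translation to make the entries nonnegative, or replace $x^{c_i}$ by $e^{c_i t}$ and run the identical order-of-vanishing argument at $t=0$ for the resulting exponential sums; with that remark added, the proof is complete.
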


In other words, pair $(n, 2)$ is singular if and only if $n$ is a power of 2. Or, using our notation,
$$
\mcm_2 = \{4, 8, 16, 32, \ldots\} = \{2^k\,:\, k > 1\} \pp
$$

Second, they have explored case of $s=3$ of the more general problem \textbf{\ref{th:prob_MNK}}. They have shown that for $n=6$ there are easily constructed examples of different multisets of $n$ numbers $A$ and $B$ such that $A \eqs[0.2ex]3 B$. For instance:
\begin{align*}
A = \{1^5, -5\},\ B = \{(-1)^5, 5\} &\Longrightarrow A \neq B \\
\sms A3 = \{3^{10}, (-3)^{10}\},\ \sms B3 = \{(-3)^{10}, 3^{10}\} &\Longrightarrow \sms A3 = \sms B3 \pc
\end{align*}
where $1^5$ is not $1$ as you might have thought. In standard multiset notation $a^b$  means element $a$ with multiplicity $b$ (i.e., $b$-multiple entry of number $a$), so we simply mean that
$$
A = \{1,1,1,1,1,-5\}, \quad B = \{-1,-1,-1,-1,-1, 5\}\pp
$$

As for other values of $n$, the authors of \cite{SelStr} have proved that such example for case $s=3$ can exist only if $n^2 - (2^k+1)n+2\cdot 3^{k-1}$ vanishes for some natural $k<n$. It is not very hard to prove that the only other nontrivial values of $n$ for which that is possible are $n=27$ (with $k = 5,9$) and $n=486$ ($k=9$).

Thus, they have showed that
$$
\{6\}\subset \mcm_3 \subset \{6, 27, 486\}\pp
$$

Third, using the same technique for the case of $s=4$ it was shown that the only nontrivial values of $n$ when recovery might not be always possible are $n=8$,~$12$.

Presenting an example for $n=8$ is quite easy. Generally, one can always construct an example of ``recovery failure'' in \textbf{Problem \ref{th:prob_MNK}} if $n=2s$ (we will do that later in \textbf{Section \ref{sec:Simple_Examples}}). Again, this can be written as
$$
\{8\}\subset \mcm_4 \subset \{8, 12\}\pp
$$

Naturally, that suggested a few additional questions.

\begin{question}
\label{th:prob_s3}
Do pairs $(n,s) = (27,3)$ and $(486,3)$ represent actual ``recovery failures''? In other words, do there exist for $n=27$ and $n=486$ examples of different $n$-multisets $A$ and $B$ such that $\sms A3 = \sms B3$? 
\end{question}

\begin{question}
\label{th:prob_s4}
Same question about pair $(n,s) = (12,4)$. That is, do there exist two different $12$-multisets $A$ and $B$ such that $\sms A4 = \sms B4$? 
\end{question}

At that time both questions were left unsolved.

\smallskip

Yet another important question from the same article:
\begin{question}
\label{th:prob_mt2}
In cases when recovery is impossible, could there exist more than two $n$-multisets that generate identical multisets of $s$-sums?
\end{question}

Authors hypothesized that the answer to this one was negative.

\subsection*{(1959)}

Soon after the paper by Selfridge and Straus, Leo Moser and his coauthor Joachim (Jim) Lambek wrote a small article \cite{LamMos2}. It started by acknowledging results of their colleagues from UCLA, and then they proceeded to develop the problem in a slightly different direction.

Namely, they asked a question whether the set of non-ne\-ga\-tive integers can be split in two subsets $A = \{a_1, a_2, \ldots\}$ and $B = \{b_1, b_2, \ldots\}$ such that $\sms A2$ and $\sms B2$ coincide as multisets. They proved that the answer was positive and that there exists only one such decomposition of $\mbz_{\gte0}$. 

They did that by using multiset generating functions.

\begin{definition}
For any finite multiset $A$ of non-ne\-ga\-tive integers of the form $\{a_1^{k_1}, a_2^{k_2}, \ldots, a_m^{k_m}\}$ we define its generating function (polynomial) $f_A(x)$ by formula
$$
f_A(x) = \sum_{i=1}^m k_i x^{a_i} \pp
$$
Similarly this generating function can be defined for an infinite multiset $A = \{a_1^{k_1}, a_2^{k_2}, \ldots\}$ as long as sequence $\{k_i^{1/a_i}\}$ is bounded.
\end{definition}

Authors proved that in their particular case generating functions satisfied system of equations:
$$
\begin{cases}
f_A(x) + f_B(x)     &= 1/(1-x) \\
f_A^2(x) - f_A(x^2) &= f_B^2(x) - f_B(x^2) \pp
\end{cases}
$$

It was also proved that similar split of $Z_n = \{0,1,2,\ldots, n-1\}$ is possible if and only if $n$ is a power of 2. That split is unique and is determined by the so-called \textit{Thue-Morse sequence} $\{\alpha_n\}$ defined as $\alpha_n = s_2(n) \mmod2$ where $s_2(n)$ is the \textit{binary weight} of $n$, i.e., sum of digits (or simply, the number of ones) in the binary representation of $n$. So if $n = 2^p$ and we define sets $A = \{a_1, \ldots, a_m\}$ and $B = \{b_1, \ldots, b_m\}$ as follows
\begin{align*}
A &= \{k \in Z_n : \alpha_k = 0\} \\
B &= \{k \in Z_n : \alpha_k = 1\} 
\end{align*}
then $\sms A2 = \sms B2$. Indeed, if you set $f_A(x) = \sum_{i=1}^m x^{a_i}$ and $f_B(x) = \sum_{i=1}^m x^{b_i}$, then we have
$$
f_A(x) + f_B(x) = u(x) = \frac{1-x^{2^p}}{1-x}, \quad
f_A(x) - f_B(x) = v(x) = \prod_{i=0}^{p-1}(1-x^{2^i}) \pp
$$

Thus $f_A = (u+v)/2$ and $f_B = (u-v)/2$. From that it follows quite easily that $f_A^2(x) - f_A(x^2) = f_B^2(x) - f_B(x^2)$. It is left to notice that the sides in the the last equality are generating functions for multisets $\sms A2$ and $\sms B2$ respectively.

A somewhat similar ``generating functions'' approach was used later in \cite{GorFraStr}, \cite{BomBolOne} and \cite{FomIzh} in conjunction with some other ideas.

\subsection*{(1962)}

The next paper on the subject appeared in 1962, when Gordon, Fraenkel, and Straus published \cite{GorFraStr} proving that answer to \textbf{Question \ref{th:prob_mt2}} was positive. This was not the last time when Multiset Recovery Problem defied the expectations.

Authors have found numerous multi-sin\-gu\-la\-rity examples for the simplest case of $s=2$. More precisely, they have showed how to construct examples of three different 8-multisets $A$, $B$, and $C$ such that $\sms A2 = \sms B2 = \sms C2$. Here is one of these examples:
$$
A = \{0, 5, 6, 7, 9, 10, 11, 16\}; \ 
B = \{1, 4, 5, 6, 10, 11, 12, 15\}; \ 
C = \{2, 3, 4, 7, 9, 12, 13, 14\}.
$$

\smallskip

After this, naturally, the original question was adjusted into asking how many different $n$-multisets could generate the same multiset of $s$-sums. The maximum possible number of such multisets was denoted by $\mcf_s(n)$. Of course, the pair $(n,s)$ must be singular to begin with---which is equivalent to inequality $\mcf_s(n) > 1$.

Then more inequalities for $\mcf_s(n)$ were proved. For instance,
$$
\mcf_2(16) \lte 3,\quad 2 \lte \mcf_3(6) \lte 6,\quad \mcf_4(12) \lte 2\pp
$$
However, no other examples of triple (or greater) ``multiplicity'' were found, which led to another open question:

\begin{question}
\label{th:prob_mtx}
a) For $s=2$ does there exist $n = 2^p > 8$ such that some three distinct $n$-multisets generate the same multisets of $s$-sums (i.e., $\mcf_2(n) > 2$)?

b) Generally, does there exist any singular pair $(n,s)$ different from $(8,2)$ with three pairwise distinct $s$-equivalent $n$-multisets (i.e., $\mcf_s(n) > 2$)?
\end{question}

Two more results in the same article deserve mention. One was to prove that when dealing with any question about multiset recovery it was enough to work with the ring of integers $\mbz$, instead of arbitrary fields of characteristic zero or torsion-free Abelian groups. Another result resolved one of the questions about number of elements in $\mcm_s$. Namely, the authors proved that $\mcm_s$ was finite for all $s>2$.

\subsection*{(1962)}

The very same year the first part of the original \textbf{Problem \ref{th:prob_M2}} was used in one of the top math contests in the Soviet Union -- namely, Moscow City Mathematical Olympiad. It is quite possible that some Soviet mathematician had seen the  article \cite{GorFraStr} and liked the original question well enough to submit it to the olympiad committee. It was given to high school juniors and proved to be one of the more difficult problems of that year. An unpublished compilation of problems from that competition (translated into English) can be found in \cite{GalTol}.

\subsection*{(1968)}

Among two questions about suspect pairs (\textbf{Questions \ref{th:prob_s3}, \ref{th:prob_s4}}) the latter---$n = 12, s = 4$---seemed easier. So it was not surprising that ``only'' ten years after the original article \cite{SelStr}, John Ewell published his paper \cite{Ewe} claiming that pair $(12,4)$ was not singular and recovery was always possible for this case (it became a part of his Ph.D thesis). He also found a purely combinatorial and more direct proof of the important formula \eqref{eq:F_skn} (see below, in \textbf{Section \ref{sec:Sym_Poly}}).

Many years later further investigation uncovered an error in calculations regarding the pair $(12,4)$. However, another result in the same article was clearly correct---namely, Ewell demonstrated that the answer to \textbf{Question \ref{th:prob_mtx}(b)} was positive. He has proved that $\mcf_3(6) = 4$ and then went on to provide complete characterization of all possible quartets of pairwise different 3-equivalent 6-multisets.

We will give you one of these examples as a demonstration of Ewell's discovery
$$
A = \{0, 5, 9, 10, 11, 13\};
B = \{1, 5, 8, 9, 10, 15\};
C = \{1, 6, 7, 8, 11, 15\};
D = \{3, 5, 6, 7, 11, 16\} \pc
$$
leaving the actual verification as an easy exercise for the reader.

\subsection*{(1981)}

Richard Guy mentioned multiset recovery in his compendium of unsolved problems in number theory, see \cite{Guy}, \textbf{Problem C5}. He explained that it had been solved for $s=2$ while the question for values of $\mcf_2$ still had not been answered in full. Case $s=3$ for $n=27$ and $n=486$ was once again posed as an open question.

\subsection*{(1991)}

As far as we know, after Ewell's thesis Multiset Recovery Problem slipped into relative obscurity until 1991, when Boman, Bolker and O'Neil have explored a slightly different approach in their article \cite{BomBolOne}. 

More precisely, for point $x=(x_1, x_2, \ldots, x_n)$ in $n$-dimensional euclidean space $\mbr^n$ and for any $s$-subset $A = \{a_1,\ldots,a_s\}$ of $I_n = \{1,2,\ldots, n\}$ let us define $x_A$ as the sum $x_{a_1}+\ldots+x_{a_s}$. Then we can define linear operator 
$$
R_{n,s}:\mbr^n \to \mbr^{\binom ns}, \quad
R_{n,s}(x_1, x_2, \ldots, x_n) = (x_{A_1}, x_{A_2}, \ldots, x_{A_{\binom ns}})\pc
$$
where $A_1$, \dots, $A_{\binom ns}$ is the sequence of all $s$-subsets in $I_n$. This is a ``sort'' of discrete (combinatorial) version of Radon integral transform. Mapping $R_{n,s}$ can obviously be transferred from euclidean spaces to their reductions modulo standard actions (permutations of coordinates) of symmetric groups $S_n$ and $S_{\binom ns}$ respectively so that we have $R_{n,s}: \mathbb L_n \to \mathbb L_{\binom ns}$ where $\mathbb L_k = \mbr^k/S_k$. Then Multiset Recovery Problem can be posed as a question on whether $R_{n,s}$ is an injection. Or more generally, as a question on the size of $R_{n,s}^{-1}(x)$, with $x\in\mathbb L_{\binom ns}$.

Using this notation and terminology they proved---among other things---that when recovering $n$-multiset from the collection of its 2-sums one cannot obtain more than $n-2$ different multisets. Improving on that result they have also showed that for any $n \neq 8$ this upper bound could actually be lowered to 2 and almost always to 1. Thus they solved \textbf{Question \ref{th:prob_mtx}(a)}.

It is worth noting that judging by the list of the open questions, at that time the authors did not know about Ewell's paper \cite{Ewe}.

\subsection*{(1992)}

By some happy ``accident'' in 1991 \textbf{Question \ref{th:prob_M2G}} was used in a student mathematical contest in St.Petersburg, USSR. The author of this survey was---as surely many other mathematicians before him---fascinated by this seemingly simple problem, and started his own investigation. That resulted in article \cite{FomIzh} by Fomin and Izhboldin submitted for Russian publication in 1992 (English translation was published in 1995).

Most of that article was about rediscovering the very same results already achieved in \cite{SelStr} and \cite{GorFraStr}---unfortunately, due to a rather poor access to international scientific magazines the authors could not properly search for the papers already written on this issue. However, their article still contained one completely new result: singularity examples which positively answered \textbf{Question \ref{th:prob_s3}} for both cases ``under suspicion''. Pairs $(27,3)$ and $(486,3)$ were proved singular.

Thus, investigation of generalized Multiset Recovery Problem \textbf{\ref{th:prob_MNK}} for the case of $s=3$ was closed.

\subsection*{(1996)}

Just a few short years later, Boman and Linusson have independently come up with singularity examples for pairs $(27,3)$ and $(486,3)$ in \cite{BomLin}. Alas, they thought that case $(12,4)$ was already resolved by Ewell---at the end of their paper they mentioned that they were told (apparently at the very last moment) about article \cite{Ewe}. They also made some inroads into finding all possible singularity examples for $s=3$.

\subsection*{(1997)}

Ross Honsberger dedicated a chapter called ``A Gem from Combinatorics'' of his book \cite{Hon}  to the case $s=2$ of Multiset Recovery Problem. It is curious that he never mentions Leo Moser. Instead Honsberger stated that the results he had reproduced came from Paul Erdős and John Selfridge. This is the only time when Erdős's name appears in this story. It is not clear whether he really has done something there or possibly it was just a mistake in attribution.

\smallskip

The same year Tewodros Amdeberhan and Melkamu Zeleke (both from Temple University, Philadelphia, PA) have shown in \cite{AmZel} that the combinatorial Radon transform of order $5$ is almost always injective by proving that polynomials $\fsk 5k (n)$ have no other positive integer roots but $n = 2$, $3$, $4$, $5$, and~$10$. This closed the case of~$s=5$.

\subsection*{(2003)}

In chapter 46 of their engaging book \cite{SavAnd}, Savchev and Andreescu explained the solution for \textbf{Question \ref{th:prob_M2G}} and also went over the results from Lambek and Moser's article \cite{LamMos2} concerning Thue-Morse sequence. 

\subsection*{(2008)}

A slightly expanded version of \textbf{Question \ref{th:prob_M2N}} with extra items repeating parts of \cite{GorFraStr} and \cite{BomBolOne} was published as another problem in American Mathematical Monthly---submitted by Chen and Lagarias, \cite{CheLag}. Some of the solutions were subsequently posted and discussed on the Cut-The-Knot website, see \cite{Bog}. 

\subsection*{(2016)}

Nothing significant happened for quite some time until Isomurodov and Kokhas (\cite{IsoKok}) discovered that Ewell made a mistake in his lengthy polynomial computations for the pair~$(12,4)$. We will never know how that happened, but nowadays mathematicians no longer have to do all these exhausting calculations by hand---for instance, authors of \cite{IsoKok} made use of symbolic computational package \textsc{Maple}\texttrademark. After that the authors proved the existence of the ``recovery failure'' example and actually produced it, thus solving \textbf{Question \ref{th:prob_s4}} and finalizing case $s=4$ of \textbf{Problem \ref{th:prob_MNK}} (see below in \textbf{Section \ref{sec:Sym_Poly}}).

\section{Some simple examples}
\label{sec:Simple_Examples}

This short section explains how to construct some simple examples of singular pairs.

\subsection*{Case \texorpdfstring{$s=2$, $n = 2^k$}{s=2, n=2\textasciicircum k}} The most obvious and trivial of all examples of ``recovery failure'' is the pair $(2,2)$: one cannot hope to restore a set of two numbers knowing only their sum. This is not a ``real'' singular pair (because $n=s$) but we can use it as a basis from which less trivial examples are built.

Namely, if we have two $n$-multisets $A$ and $B$ which are 2-equivalent, then for any number $d$ we have
\begin{equation}
\label{eq:abd_eqs_adb}
A \cup (B+d) \eqs[0.2ex]2 B \cup (A+d)
\end{equation}
where $X+d$ is multiset obtained from $X$ by adding $d$ to all of its elements. So if we start with
$$
A=\{1,1\},\ B=\{0,2\},\ A \eqs[0.2ex]2 B
$$
then choosing $d=1$ we get
$$
A'=\{1,1,1,3\},\ B'=\{0,2,2,2\},\ A' \eqs[0.2ex]2 B' \pp
$$
Proceeding in this manner, we can easily build examples of  2-equivalent $n$-multisets for any $n$ which is a power of 2. Again, we will leave the proof of \eqref{eq:abd_eqs_adb} as an exercise for the reader.

\subsection*{Case \texorpdfstring{$n = 2s$}{n=2s}} Remember that singularity example for $n=6$, $s=3$ from \textbf{Section \ref{sec:Timeline}}? It can be easily generalized for any pair $(n,s)$ where $n=2s$.

Namely, you can take some $2s$-multiset $A$, find its arithmetic mean $a$ and reflect $A$ with respect to $a$ to obtain what we will call its \textit{mirror} multiset $\tilde A = 2a - A$. As long as $A$ is not symmetric, $\tilde{A}$ will be different from $A$ and $A \eqs s \tilde{A}$. To prove that it is sufficient to notice that for each $M\subset A$ with $|M|=s$ the mirror image of $A\backslash M$ (which is a sub-multiset of $\tilde{A}$ consisting of $s$ numbers) has the same sum of elements.

For demonstration purposes we only need one example---let us consider $A = \{1^{2s-1}\pc 1-2s\}$ and its mirror $\tilde{A} = \{(-1)^{2s-1}\pc 2s-1\}$.

All the $s$-sums of numbers in $A$, there are $\binom{2s}{s}$ of them, fall into two groups. One consists of the sums that include element $(1-2s)$---there are $\binom{2s-1}{s-1}$ of those, and each one of these sums is equal to $(s-1)\cdot 1 + (1-2s) = -s$. The other one has $\binom{2s-1}{s}$ sums that do not have $(1-2s)$ in them, each one of them equal to $s$. Thus we have $\sms As = \{(-s)^m, s^m\}$ where $m = \binom{2s-1}{s-1} = \binom{2s-1}{s}$. You can see that $\sms As$ is symmetric (with respect to zero) and thus $\sms{\tilde{A}}s = \sms As$.

\subsection*{Duality \texorpdfstring{$(n,s) \leftrightarrow (n, n-s)$}{(n,s)<-->(n,n-s)}} 

If we have two $s$-equivalent $n$-multisets $A$ and $B$, then these same multisets are $(n-s)$-equivalent as well. To prove that, it is sufficient to demonstrate that the sum of all elements of $A$ equals to that of $B$. Quick computation shows that
$$
\sum_{1\lte i_1 < i_2 \cdots < i_s \lte n} (a_{i_1}+a_{i_2}+\ldots+a_{i_s})
  = \binom{n-1}{s-1} \sum_{1\lte i \lte n} a_i \pp
$$

Thus the sum of numbers in $A$ equals the sum of numbers in $B$ and denoting that number by $S$ we have
$$
\sms A{n-s} = S - \sms As, \quad \sms B{n-s} = S - \sms Bs \pc
$$
which proves the duality. This means we can always assume that $n \gte 2s$; if $s < n < 2s$, then we can switch to the pair $(n, s')$ where $s' = n-s$ and $n > 2s'$.

This duality allows us to generate more examples of singular pairs. For instance, since $(8, 2)$ is singular then $(8, 6)$ is singular, too. As we will see soon, the pairs $(27, 3)$, $(486, 3)$, and $(12, 4)$ are singular---therefore, the pairs $(27, 24)$, $(486, 483)$, and $(12, 8)$ are singular as well.

Later in this article (see \textbf{Section \ref{sec:Roots_Fsk}}) we will talk more about this duality and its partial expansion.

\subsection*{Linear transformations} 

Finally, one obvious but useful fact. If $A\eqs s B$ and $f(x)=px+q$ is some arbitrary linear function, then the multisets $f(A)$ and $f(B)$ are also $s$-equivalent. That simply means we can translate and stretch/shrink singularity examples to obtain new ones. 

For instance, if you consider $A = \{0, 5, 9, 10, 11, 13\}$, $B = \{1, 5, 8, 9, 10, 15\}$, then  $A\eqs[0.2ex]3 B$. Applying $f(x)=2x-13$ we obtain a new pair of 3-equivalent multisets $A_1 = \{-13, -3, 5, 7, 9, 13\}$, $B_1 = \{-11, -3, 3, 5, 7, 17\}$.

Of course, all the singularity examples that can be obtained from each other by such operations will be considered identical for the purposes of this investigation.

\section{Moser polynomials}
\label{sec:Sym_Poly}

Now let us delve into specific techniques used in multiset recovery. The main one is based on the following approach that utilizes symmetric polynomials.

Given $n$-multiset $A = \{a_1, \ldots, a_n\}$ we can produce a sequence of sums of its $k$-th powers for $k=1,\ldots,n$. That is, we can apply power-sum symmetric polynomials in $n$ variables
$$
\sigma_k(x_1, \ldots, x_n) = \sum_{i=1}^n x_i^k
$$
to multiset $A$ to obtain sequence $\sigma_1(A), \ldots, \sigma_n(A)$. It is well known that $A$ is uniquely defined by this sequence---values of $\sigma_k(A)$ determine coefficients of polynomial $(x-a_1)(x-a_2)\ldots(x-a_n)$ and therefore determine the multiset of its roots.

Thus if all values of $\sigma_k(A)$ for $k = 1,\ldots, n$ can be deduced from values of $\sigma_k(\sms As)$, then multiset $A$ is uniquely determined by multiset $\sms As$.

Let us start from small values of $k$. For $k=1$ we have already computed
$$
\sigma_1(\sms As) = \binom{n-1}{s-1}\sigma_1(A)
$$
and therefore, if $\sms As = \sms Bs$, then $\sigma_1(A) = \sigma_1(B)$. This means that $\sigma_1(A)$ can always be found from $\sms As$. 

\medskip

Now, if $k=2$, then 
\begin{align*}
\sigma_2(\sms As) &= \sum_{1\lte i_1 < \cdots < i_s \lte n}(a_{i_1}+\ldots+a_{i_s})^2 =
\binom{n-1}{s-1}\sum_{1\lte i \lte n} a_i^2  + \binom{n-2}{s-2}\sum_{1\lte i<j\lte n}2a_ia_j = \\
&= \binom{n-1}{s-1}\sum_{1\lte i \lte n} a_i^2  + \binom{n-2}{s-2}\sum_{1\lte i\neq j\lte n}a_ia_j = \\
&= \left(\binom{n-1}{s-1}-\binom{n-2}{s-2}\right)\sum_{1\lte i \lte n} a_i^2  + \binom{n-2}{s-2}\sum_{1\lte i,j\lte n}a_ia_j = \\
&= \binom{n-2}{s-1}\sigma_2(A) + \binom{n-2}{s-2}\sigma_1(A)^2 \pp
\end{align*}

We already know $\sigma_1(A)$ and thus we can find $\sigma_2(A)$ as long as coefficient $\binom{n-2}{s-1}$ is not zero. For $n>s>0$ that is always true and therefore $\sigma_2(A)$ is also always ``recoverable''.

Since $\sigma_k(\sms As)$ is a symmetric polynomial in $a_1$, \dots $a_n$, it can be expressed in the following way:
\begin{equation}
\label{eq:sigma_As_poly}
\sigma_k(\sms As) = \alpha_{s,k,n} \sigma_k(A) +
 \mathcal P(\sigma_1(A), \sigma_2(A), \ldots, \sigma_{k-1}(A)) \pc
\end{equation}
where $\alpha_{s,k,n}$ is a constant (in terms of variables $a_i$) defined by three numbers $n, s, k$, and $\mathcal P$ is some polynomial in $k-1$ variables $\sigma_1$, \dots, $\sigma_{k-1}$, whose coefficients are fully defined by that triplet as well. For instance, as we have just shown, for $k=2$ we have $\alpha_{s,k,n} = \binom{n-2}{s-1}$ and $\mathcal P(\sigma) = \binom{n-2}{s-2}\sigma^2$.

It follows that if we could show that coefficient $\alpha_{s,k,n}$ does not vanish, then we will have proved that $\sigma_k(A)$ is determined by $\sigma_1(A)$, $\sigma_2(A)$, \dots, $\sigma_{k-1}(A)$, $\sigma_k(\sms As)$. Thus, if $\alpha_{s,k,n} \neq 0$ for all $1\lte k \lte n$, then using this for $k=1$, then for $k=2$ etc, we can conclude that multiset $A$ can be recovered from multiset $\sms As$ and the pair $(n,s)$ is not singular.

The following equality is true:

\begin{theorem}
\label{th:F_skn}
\begin{equation}
\label{eq:F_skn}
\alpha_{s,k,n} = \sum_{p=1}^s (-1)^{p-1}p^{k-1}\binom{n}{s-p}\pp
\end{equation}
\end{theorem}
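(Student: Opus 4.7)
The plan is to package every $\sigma_k(\sms As)$ simultaneously into a single bivariate generating function and then pass to power sums via a logarithm. Consider
$$G(u,t)=\prod_{i=1}^n\bigl(1+u\,e^{t a_i}\bigr)=\sum_{S\subset\{1,\ldots,n\}}u^{|S|}e^{t\sum_{i\in S}a_i}=\sum_{s=0}^n u^s\sum_{k\ge0}\frac{t^k}{k!}\,\sigma_k(\sms As)\pc$$
so that $G(u,t)$ serves as a bookkeeping device for all the quantities $\sigma_k(\sms As)$ at once.

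Next I would take the logarithm. Using $\log(1+x)=\sum_{j\ge1}\frac{(-1)^{j-1}}{j}x^j$ together with $\sum_i e^{jta_i}=\sum_{k\ge0}\sigma_k(A)\,(jt)^k/k!$, and interchanging summations,
$$\log G(u,t)=n\log(1+u)+\sum_{k\ge1}\frac{t^k}{k!}\,c_k(u)\,\sigma_k(A)\pc\qquad c_k(u):=\sum_{j\ge1}(-1)^{j-1}j^{k-1}u^j\pp$$
The crucial feature is that each power sum $\sigma_k(A)$ now appears linearly and decoupled from the others.

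The decisive step is to treat the $\sigma_k(A)$'s as independent formal indeterminates. By \eqref{eq:sigma_As_poly}, $\sigma_k(\sms As)=\alpha_{s,k,n}\,\sigma_k(A)+\mcp(\sigma_1(A),\ldots,\sigma_{k-1}(A))$; since this expression is homogeneous of weight $k$ when $\sigma_j$ is assigned weight $j$, every monomial of $\mcp$ has at least two factors. Hence $\alpha_{s,k,n}$ is precisely the coefficient of the linear monomial $\sigma_k(A)$ (with all other $\sigma_j(A)$'s set to zero) in $\sigma_k(\sms As)$. Exponentiating the log formula gives
$$G(u,t)=(1+u)^n\prod_{k\ge1}\exp\!\Bigl(\frac{t^k}{k!}\,c_k(u)\,\sigma_k(A)\Bigr)\pc$$
and the same linear-coefficient extraction on the factorised right-hand side produces $(1+u)^n\cdot\tfrac{t^k}{k!}\,c_k(u)$.

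Matching this with $\sum_s u^s\cdot\frac{t^k}{k!}\,\alpha_{s,k,n}$ (the corresponding slice of the original expansion of $G$) and extracting the coefficient of $u^s$ yields
$$\alpha_{s,k,n}=[u^s]\bigl((1+u)^n\,c_k(u)\bigr)=\sum_{j=1}^{s}(-1)^{j-1}j^{k-1}\binom{n}{s-j}\pc$$
which is the desired identity \eqref{eq:F_skn}. The main obstacle is conceptual rather than computational: one must resist the temptation to evaluate $\alpha_{s,k,n}$ by brute-force multinomial expansion---an approach that quickly becomes tangled in the change of basis between monomial and power-sum symmetric functions via Newton's identities---and instead exploit the fact that $\log G(u,t)$ is linear in the power sums $\sigma_k(A)$, which makes the desired coefficient visible immediately after exponentiation.
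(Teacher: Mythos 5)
Your argument is correct. The paper itself does not include a proof of Theorem~\ref{th:F_skn}, only citing three published ones, but the route you take --- packaging all the $\sigma_k(\sms As)$ into the bivariate exponential generating function $\prod_i(1+u\,e^{ta_i})$, taking logarithms so that each power sum $\sigma_k(A)$ appears linearly with coefficient $\frac{t^k}{k!}\sum_j(-1)^{j-1}j^{k-1}u^j$, and then extracting the coefficient of $\sigma_k(A)$ (legitimate since $\sigma_1,\dots,\sigma_n$ are algebraically independent and the weight-$k$ homogeneity forces $\mathcal P$ to have no linear part) --- is precisely the exponential-generating-function proof the paper attributes to \cite{FomIzh}.
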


Theorem \ref{th:F_skn} was proved in \cite{GorFraStr} by some neat manipulation of a formula from \cite{SelStr}. Later a purely combinatorial and more direct proof of \eqref{eq:F_skn} was given in \cite{Ewe}. And then it was again ``rediscovered'' and proved (in a somewhat different manner, by making use of exponential generating functions) in \cite{FomIzh}.

So the coefficient $\alpha_{s,k,n}$ turns out to be a polynomial in $n$.

\begin{definition}
The right side of equation \eqref{eq:F_skn} will be called the \textbf{Moser polynomial} and will be denoted by $\fsk sk(n)$.
$$
\fsk sk(n) = \sum_{p=1}^s (-1)^{p-1}p^{k-1}\binom{n}{s-p}\pp
$$
We will also set $\fsk sk(n) = 0$ for any integer $s < 1$. In this way Moser polynomials are defined for any integer number $s$ and natural number $k$.
\end{definition}

We will leave it to the reader as a simple exercise to prove that for $k=2$ formula \eqref{eq:F_skn} is indeed equivalent to $\fsk s2(n) = \binom{n-2}{s-1}$.

\medskip

Incidentally, even without this formula case $s=2$ (\textbf{Problem \ref{th:prob_M2G}}) can now be resolved in a very straightforward manner. Computing $\sigma_k(\sms A2)$ we obtain (using notation from \eqref{eq:sigma_As_poly})
$$
\alpha_{2,k,n} = n - 2^{k-1}\pc
$$
which means that $n$-multiset is always recoverable from the multiset of its 2-sums if $n$ is not a power of 2. As we already know, if $n$ is a power of 2, then $n$-multiset $A$ cannot always be recovered from $\sms A2$.

\subsection*{Case \texorpdfstring{$s=3$}{s=3}} Equation \eqref{eq:F_skn} gives us
\begin{align*}
\fsk 3k(n)  &= \binom{n}{2} - 2^{k-1}\binom{n}{1} + 3^{k-1} \pc \\
\vfs
2\fsk 3k(n) &= n^2 - n(2^k+1) + 2\cdot3^{k-1}\pp
\end{align*}

Investigation here is again relatively straightforward. First, we can prove that $\fsk 3k(n)$ cannot be zero for positive integer $n$ if $k > 12$. Second, we check all the cases with $k\lte 12$ and verify that polynomials $\fsk 3k$ have integer roots if and only if $k \in \{1, 2, 3, 5, 9\}$. For these five special cases we have 
\begin{align*}
\fsk 31(n) &= \frac12(n-1)(n-2)\pc \\
\fsk 32(n) &= \frac12(n-2)(n-3)\pc \\
\fsk 33(n) &= \frac12(n-3)(n-6)\pc \\
\fsk 35(n) &= \frac12(n-6)(n-27)\pc \\
\fsk 39(n) &= \frac12(n-27)(n-486)\pp
\end{align*}

From \textbf{Section \ref{sec:Simple_Examples}} we already know that the pair $(6,3)$ is singular. To prove the same for pairs $(27,3)$ and $(486,3)$ we present the following examples:
$$
A'_{27} = \{0, 1^{16}, 2^{10}\} \pc
\ A''_{27} = \{0^5, 1^{10}, 2^{10}, 3^2\} \pc
\ A'''_{27} = \{0, 1^5, 2^{10}, 3^6, 4^5\} \pc
\ A_{486} = \{0^{22}, 1^{176}, 2^{231}, 3^{56}, 4\} \pp
$$

We will leave it to the reader to verify that each one of these four multisets is 3-equivalent to its mirror. Nowadays, this can be done in minutes, using just a few lines of code in some decent computational package.

Summary: $\mcm_3 = \{6, 27, 486\}$.

\subsection*{Case \texorpdfstring{$s=4$}{s=4}} From \eqref{eq:F_skn} we obtain
\begin{align*}
\fsk 4k(n) &= \binom{n}{3} - 2^{k-1}\binom{n}{2} + 3^{k-1}\binom{n}{1} - 4^{k-1} \pc \\
\vfs
6\fsk 4k(n)&= n^3 - n^2(3\cdot2^{k-1}+3) + n(2\cdot3^k + 3\cdot2^{k-1} + 2) -6\cdot4^{k-1}\pp
\end{align*}

Using divisibility and other relatively straightforward number theory ideas we can prove that for $k>7$ polynomials $\fsk 4k$ do not have positive integer roots. And, finally,
\begin{align*}
\fsk 41(n) &=\frac16(n-1)(n-2)(n-3)\pc \\
\fsk 42(n) &=\frac16(n-2)(n-3)(n-4)\pc \\
\fsk 43(n) &=\frac16(n-3)(n-4)(n-8)\pc \\
\fsk 44(n) &=\frac16(n-4)(n^2-23n+96)\pc \\
\fsk 45(n) &=\frac16(n-8)(n^2-43n+192)\pc \\
\fsk 46(n) &=\frac16(n-12)(n^2-87n+512)\pc \\
\fsk 47(n) &=\frac16(n-8)(n^2-187n+3072)\pp
\end{align*}

Case $n=8$ is ``trivial''---this is the situation $n = 2s$ which is well known to us by now. The only other nontrivial root of $\fsk 4k$ polynomials is 12. As we mentioned before, this case turned out to be tougher than the others---a pair of 4-equivalent 12-multisets was found only in 2016 by Isomurodov and Kokhas. Namely, if we consider the two following different 12-multisets
$$
A = \{1^2, 4, 6, 7, 8^2, 9, 10, 12, 15^2\},\quad
B = \{0, 3, 4, 5, 6, 7, 9, 10, 11, 12, 13, 16\} \pc
$$
then $A \eqs[0.2ex]4 B$.

In \cite{IsoKok} the authors have actually proved that this is the only possible singularity example for case $(12, 4)$ (considering pairs of multisets that differ only by linear transformation to be identical).

Summary: $\mcm_4 = \{8, 12\}$.

\subsection*{Case \texorpdfstring{$s=5$}{s=5}} In this case we have
\begin{align*}
\fsk 5k(n) &= \binom{n}{4} - 2^{k-1}\binom{n}{3} + 3^{k-1}\binom{n}{2} - 4^{k-1}\binom{n}{1} + 5^{k-1} \pc \\
\vfs
24\fsk 5k(n)&= n^4 - n^3(2^{k+1}+6) + n^2(4\cdot 3^k + 3\cdot2^{k+1} + 11) -  n(6\cdot 4^k + 4\cdot 3^k + 4\cdot 2^k + 6) + 24\cdot 5^{k-1} \pp
\end{align*}

See  \cite{AmZel} for the proof that with the exception of $n=5$ and $n=10$ polynomials $\fsk 5k (n)$ have no other integer roots greater than or equal to~$5$.

Summary: $\mcm_5 = \{10\}$.

\section{Digging for roots (of Moser polynomials)}
\label{sec:Roots_Fsk}

Since we know that pair $(n,s)$ can be singular only if $n$ is a root of $\fsk sk$ for some $1\lte k \lte n$, then let us turn our attention to finding out more about those roots. (The rest of this section was inspired by the proof of \textbf{Theorem 7} from \cite{SelStr}).

For this let us take another, closer, look at the Moser polynomials for the first few values of $k$.

\subsection*{Case \texorpdfstring{$k=1$}{k=1}} We already know that 
$$
\fsk s1(n) = \binom{n-1}{s-1} = \frac1{(s-1)!}(n-1)(n-2)\ldots(n-s+1) = \frac1{(s-1)!}  \prod_{p=1}^{s-1} (n-p) \pp
$$

Thus the roots are 1 through $s-1$ and of no interest to us---$n$ has to be greater than $s$ to provide us with a possibly singular pair $(n,s)$.

\subsection*{Case \texorpdfstring{$k=2$}{k=2}} We have also computed this one before.
$$
\fsk s2(n) = \binom{n-2}{s-1} = \frac1{(s-1)!} \prod_{p=2}^{s} (n-p) \pp
$$

Again, no roots of interest. Let us go on.

\subsection*{Case \texorpdfstring{$k=3$}{k=3}} It is still fairly easy to compute
$$
\fsk s3(n) = \frac1{(s-1)!}(n-2s) \prod_{p=3}^{s} (n-p) \pc
$$
which (finally!) has a nontrivial root $n=2s$. However, we already know about it---the pair $(2s, s)$ is always singular.

\subsection*{Case \texorpdfstring{$k=4$}{k=4}} This computation might take a little longer but eventually you will get the following formula (for $s > 2$)
\begin{equation}
\label{eq:k4_fsk}
\fsk s4(n) = \frac1{(s-1)!}(n^2-(6s-1)n+6s^2) \prod_{p=4}^{s} (n-p) \pp
\end{equation}

A-ha! We now have a quadratic Diophantine equation for nontrivial roots $n$
\begin{equation}
\label{eq:k4_nsq}
n^2-(6s-1)n+6s^2 = 0 \pc
\end{equation}
which can be also rewritten as a quadratic equation for $s$
\begin{equation}
\label{eq:k4_ssq}
s^2 - sn + \frac{n(n+1)}6 = 0 \pp
\end{equation}

The sum of the roots of equation \eqref{eq:k4_ssq} is $n$---hence, if the pair $(n, s)$ with $n>s$ is a root of equation \eqref{eq:k4_fsk}, then so is the pair $(n, n-s)$. This is clearly a direct analog of the singular pairs' duality we have described in \textbf{Section \ref{sec:Simple_Examples}}. Let us call such pairs \textit{$\langle n,4\rangle$-\textbf{conjugated}} or simply $n$-\textbf{conjugated}.

In the same manner from equation \eqref{eq:k4_nsq} we can conclude that if the pair $(n, s)$ is a root of equation \eqref{eq:k4_fsk}, then its other root is the pair $(6s-1-n, s)$. These two pairs will be called \textit{$\langle s,4\rangle$-\textbf{conjugated}}.

Obviously, both types of conjugation are symmetric. It is also clear from equations \eqref{eq:k4_nsq} and \eqref{eq:k4_ssq} that for any positive solution $(n,s)$ we have $6s-1>n$ and $n>s$---otherwise the left sides of these equations are positive. Thus, conjugate pair always consists of two positive integers as well.

Let us consider the smallest possible root of \eqref{eq:k4_nsq}, namely $r = (2,1)$ (since we are solving the equation in positive integers, we are allowed to talk about ``smallest'' solution). That pair is not something we can directly use because $s$ must be at least 3 for the formula \eqref{eq:k4_fsk} to make sense. But it is still a root of our quadratic equation \eqref{eq:k4_nsq} and we will use it to produce others.

It is important to mention that the pair $r$ is self-$n$-conjugated ($2-1=1$) so the only way to produce a different solution is via $s$-conjugation. So we jump to pair $(3,1)$, then through $n$-conjugation to pair $(3, 2)$, then to $(8, 2)$, then to $(8, 6)$, and so on. 

Proceeding like that we will obtain one infinite chain of solutions of equation \eqref{eq:k4_nsq}:
\begin{multline}
\label{chain4}
\underline{(2,1)} \conj{s} \underline{(3, 1)} \conj{n} \underline{(3, 2)} \conj{s} \underline{(8,2)} \conj{n} (8, 6) \conj{s} (27, 6) \conj{n}
      \\
(27, 21) \conj{s} (98, 21) \conj{n} (98, 77) \conj{s} (363, 77) \conj{n} (363, 286) \conj{s} \cdots 
\end{multline}
We have marked the arrows with small letters "$n$" and "$s$" to show which type of conjugation was used in each case; also we have underlined pairs which are not ``fully compliant''---they are roots of equation \eqref{eq:k4_nsq} but they are not roots of the corresponding polynomial $\fsk sk$ with $s>2$ (the pair $(8,2)$ is singular but we have discounted it because of the  requirement that $s>2$). So, starting from $(8, 6)$, pairs in the chain represent valid roots of the polynomials $\fsk s4$. Thus, they are all ``suspect'' as possible singularities for Multiset Recovery Problem.

\subsection*{Case \texorpdfstring{$k=5$}{k=5}} In this case computation is also not terribly complicated. For $s>3$ we obtain
\begin{equation*}
\label{eq:k5_fsk}
\fsk s5(n) = \frac1{(s-1)!}(n^2-(12s-5)n+12s^2)(n-2s) \prod_{p=5}^{s} (n-p) \pp
\end{equation*}

Again we have a quadratic Diophantine equation which can be written like this
\begin{equation}
\label{eq:k5_nsq}
n^2-(12s-5)n+12s^2 = 0 \pp
\end{equation}
or like this
\begin{equation}
\label{eq:k5_ssq}
s^2 - sn + \frac{n(n+5)}{12} = 0 \pp
\end{equation}

As before, equation \eqref{eq:k5_ssq} gives us $n$-conjugation ``duality'' $(n, s) \leftrightarrow (n, n-s)$. And equation \eqref{eq:k5_nsq} provides us with $\langle s,5\rangle$-conjugation---namely, $(n, s) \leftrightarrow (12s-5-n, s)$.

Similarly to the previous subsection we obtain infinite chain of solutions:
$$
\underline{(3,1)} \conj{s} \underline{(4, 1)} \conj{n} \underline{(4, 3)} \conj{s} \underline{(27,3)} \conj{n} (27, 24) \conj{s} (256, 24) \conj{n} (256, 232) \conj{s} (2523, 232) \conj{n} \cdots
$$

However this case differs somewhat from the previous one. The ``minimum solution'' pair $(3,1)$ has $n$-conjugate $(3,2)$ that does not coincide with it---thus the chain can be extended in the other direction as well. Therefore we obtain more solutions:
$$
\underline{(3,2)} \conj{s} \underline{(16, 2)} \conj{n} (16, 14) \conj{s} (147,14) \conj{n} (147, 133) \conj{s} (1444, 133) \conj{n} (1444, 1311) \conj{s}  \cdots
$$
and so our two chains can be merged into one
\begin{multline}
\label{chain5}
\cdots  
(1444, 1311) \conj{s} (1444, 133) \conj{n} (147, 133) \conj{s} (147,14) \conj{n} (16, 14) \conj{s}
\\
\conj{s} \underline{(16, 2)} \conj{n} \underline{(3,2)} \conj{s} \underline{(3,1)} \conj{s} \underline{(4, 1)}  \conj{n}  \underline{(4, 3)} \conj{s} \underline{(27,3)} \conj{n} 
\\
\conj{n} (27, 24) \conj{s} (256, 24) \conj{n} (256, 232) \conj{s} (2523, 232) \cdots
\end{multline}

\smallskip

In both cases $s=4$ and $s=5$ it is easy to prove that all positive integer solutions of equations \eqref{eq:k4_nsq} and \eqref{eq:k5_nsq} belong to the chains \ref{chain4} and \ref{chain5} respectively. We will leave that to the reader.

\subsection*{Case \texorpdfstring{$k=6$}{k=6}} It would be great if the same ideas could be applied for this and subsequent cases as well. However, the computation of $\fsk s6$ shows that for $s>4$ we have the following formula:
\begin{equation*}
\label{eq:k6_fsk}
\fsk s6(n) = \frac1{(s-1)!} g_{s,6}(n) \prod_{p=6}^{s} (n-p) \pc
\end{equation*}
where
$$
g_{s,6}(n) = n^4-(30s-16)n^3+(150s^2-90s+11)n^2-(240s^3-90s^2+4)n+120s^4 \pp
$$

Some of the polynomials $g_{s,6}$ have integer roots. For instance,
\begin{align*}
g_{8,6}(n) &= (n-12)(n^3-212n^2+6347n-40960) \pc \\
g_{10,6}(n) &= (n-32)(n^3-252n^2+6047n-37500) \pc \\
g_{22,6}(n) &= (n-32)(n^3-612n^2+51047n-878460) \pc \\
g_{30,6}(n) &= (n-32)(n^3-852n^2+105047n-3037500) \pp
\end{align*}

This is basically all we get for $k=6$. Alas, no more quadratic Diophantine equations, no chains of conjugation. Also, it seems likely that polynomials $g_{s,6}$ do not have integer roots other than the ones shown above.

And, of course, the same happens with cases of even greater values of $k$---and so this line of investigation ends here.

To conclude this section, here are several useful facts about Moser polynomials. For the sake of brevity we will omit the proofs, leaving them to the reader.

\begin{prop}
Prove (preferably without using formula \eqref{eq:F_skn}) the following recurrence equations
\begin{align*}
\fsk sk(x) &= \fsk sk(x-1) + \fsk {s-1}k(x-1) \\
\fsk sk(x) &= s \fsk s{k-1}(x) - x \fsk {s-1}{k-1}(x-1) \pp
\end{align*}
\end{prop}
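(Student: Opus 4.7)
Both identities are coefficient statements that can be read off from \eqref{eq:sigma_As_poly}, and only the second really requires any calculation with the explicit formula \eqref{eq:F_skn}.

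For Recurrence 1, my plan is to argue conceptually. Set $A' = A \cup \{0\}$, a multiset of $n+1$ elements; appending a zero leaves every power sum $\sigma_j$ with $j\gte 1$ unchanged, so $\sigma_j(A') = \sigma_j(A)$. On the other hand, every $s$-subset of $A'$ either contains the new $0$---in which case its sum agrees with the corresponding $(s-1)$-subset of $A$---or it does not, in which case it is an $s$-subset of $A$. This yields the multiset splitting $\sms{(A')}{s} = \sms As \sqcup \sms A{s-1}$, and hence $\sigma_k(\sms{(A')}{s}) = \sigma_k(\sms As) + \sigma_k(\sms A{s-1})$. Apply \eqref{eq:sigma_As_poly} to both sides and compare the coefficient of $\sigma_k(A)$: the left-hand side contributes $\fsk sk(n+1)$ (since $A'$ has $n+1$ elements) while the right-hand side contributes $\fsk sk(n) + \fsk{s-1}{k}(n)$. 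Setting $x = n+1$ yields the first identity.

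For Recurrence 2, the cleanest approach is a short manipulation of the binomial sum \eqref{eq:F_skn}, pivoting on the elementary identity $(s-p)\binom{x}{s-p} = x\binom{x-1}{s-p-1}$. Regrouping gives
\[
\fsk sk(x) - s\fsk s{k-1}(x) = \sum_{p=1}^{s}(-1)^{p-1} p^{k-2}(p - s)\binom{x}{s-p} = -x \sum_{p=1}^{s}(-1)^{p-1}p^{k-2}\binom{x-1}{s-p-1}.
\]
Since the $p = s$ term vanishes (because $\binom{x-1}{-1} = 0$), the remaining sum over $p=1,\ldots,s-1$ is exactly $\fsk{s-1}{k-1}(x-1)$, which delivers the stated identity $\fsk sk(x) = s\fsk s{k-1}(x) - x\fsk{s-1}{k-1}(x-1)$.

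A purely conceptual proof of Recurrence 2, avoiding \eqref{eq:F_skn}, seems to be the main obstacle. The natural tactic is to substitute $a_i \mapsto a_i + \epsilon$ in \eqref{eq:sigma_As_poly}: shifting every element of $A$ by $\epsilon$ shifts every $s$-sum by $s\epsilon$ (so that $\sms{(A+\epsilon)}{s} = \sms As + s\epsilon$), and comparing coefficients of $\epsilon$ yields a relation of the form $s \sigma_{k-1}(\sms As) = \fsk sk(n)\sigma_{k-1}(A) + (\partial_\epsilon \mathcal{P})|_{\epsilon=0}$. The identity would then follow once one extracts the $\sigma_{k-1}(A)$-component of the derivative, but this reduces to computing the coefficient of $\sigma_1(A)\sigma_{k-1}(A)$ inside $\sigma_k(\sms As)$; a direct count of tuples $\vec{i} \in \{1,\ldots,n\}^k$ with exactly two distinct index values shows it equals $k\binom{n-2}{s-2}$. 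Carrying out this bookkeeping is what makes the conceptual route messy, and it is precisely the step that the algebraic manipulation above sidesteps.
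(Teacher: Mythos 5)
The paper itself gives no proof of this proposition---it is explicitly left to the reader ``for the sake of brevity''---so there is no official argument to compare against; judged on its own terms, your proposal is correct. The $A\cup\{0\}$ device for the first recurrence is exactly the kind of formula-free argument the parenthetical hint invites: the multiset splitting $\sms{(A\cup\{0\})}{s}=\sms As\sqcup\sms A{s-1}$ and the invariance of the power sums under adjoining a zero do give $\fsk sk(n+1)=\fsk sk(n)+\fsk{s-1}{k}(n)$. Two small points should be made explicit: comparing ``the coefficient of $\sigma_k(A)$'' in \eqref{eq:sigma_As_poly} is legitimate because for $n\gte k$ the power sums $\sigma_1,\ldots,\sigma_k$ are algebraically independent, so the decomposition $\alpha\,\sigma_k+\mathcal P(\sigma_1,\ldots,\sigma_{k-1})$ is unique; and the resulting identity, valid for all integers $n\gte k$, extends to the claimed polynomial identity in $x$ since both sides are polynomials. (If one is willing to use \eqref{eq:F_skn}, this first recurrence is also just Pascal's rule applied termwise, with the $p=s$ term absorbed by $\binom{x-1}{-1}=0$.) Your algebraic derivation of the second recurrence, pivoting on $(s-p)\binom{x}{s-p}=x\binom{x-1}{s-p-1}$ and discarding the vanishing $p=s$ term, checks out completely. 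The closing paragraph on an $\varepsilon$-shift proof of the second identity is only a sketch---and as written the left-hand side should carry the factor $k$ coming from differentiating $(x_A+s\varepsilon)^k$ before you divide it out---but since you already supply a complete algebraic proof, this does not affect the validity of the whole.
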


\begin{prop}
Prove that for $k>1$ the polynomial $\fsk sk(x)$ is divisible by $\prod_{p=k}^{s} (x-p)$.
\end{prop}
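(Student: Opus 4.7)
My plan is to establish the divisibility by induction on $k$, using the second recurrence supplied by the preceding proposition,
\[
\fsk sk(x) \;=\; s\,\fsk s{k-1}(x) \;-\; x\,\fsk{s-1}{k-1}(x-1).
\]
The statement is vacuous when $s<k$ (empty product), so I focus on $s\gte k$. For the base case $k=2$, the paper already identifies $\fsk s2(x)=\binom{x-2}{s-1}$, which as a polynomial in $x$ equals $\frac{1}{(s-1)!}\prod_{p=2}^{s}(x-p)$ and so is manifestly divisible by $\prod_{p=2}^{s}(x-p)$.

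For the inductive step, assume that for some $k\gte 3$ and every $s'\gte k-1$, the polynomial $\fsk{s'}{k-1}(x)$ is divisible by $\prod_{p=k-1}^{s'}(x-p)$. I will show that both summands on the right-hand side of the recurrence already carry $\prod_{p=k}^{s}(x-p)$ as a factor. The first summand $s\,\fsk s{k-1}(x)$ is divisible by $\prod_{p=k-1}^{s}(x-p)$ by the inductive hypothesis, and this product contains $\prod_{p=k}^{s}(x-p)$. For the second summand, applying the hypothesis with $s'=s-1$ gives that $\fsk{s-1}{k-1}(y)$ is divisible by $\prod_{p=k-1}^{s-1}(y-p)$; substituting $y=x-1$ and reindexing via $q=p+1$ yields $\prod_{p=k-1}^{s-1}(x-1-p)=\prod_{q=k}^{s}(x-q)$. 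Since the difference of two polynomials sharing a common polynomial factor retains that factor, $\fsk sk(x)$ is divisible by $\prod_{p=k}^{s}(x-p)$, closing the induction.

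The key observation that makes this go through is that the shift $x\mapsto x-1$ in the recurrence absorbs exactly the index drop $s\mapsto s-1$ on the second term, translating the available factor $\prod_{p=k-1}^{s-1}(y-p)$ precisely into the desired $\prod_{q=k}^{s}(x-q)$ rather than losing a root in the process. I do not anticipate a real obstacle beyond setting up the induction cleanly, since everything reduces to the recurrence granted by the previous proposition. As an alternative route (useful if one wished to avoid the recurrence), one could argue directly from $\fsk sk(n)=\sum_{p=1}^{s}(-1)^{p-1}p^{k-1}\binom{n}{s-p}$: substituting $q=s-p$ presents $\fsk sk(n)$, for integer $n$ with $k\lte n\lte s-1$, as $(-1)^{s+n-1}$ times the $n$-th forward difference at $0$ of the polynomial $(s-q)^{k-1}$, which vanishes since its degree $k-1$ is strictly less than $n$; the remaining endpoint $n=s$ then follows by noting that an $n$-multiset with $n=s$ has a unique $s$-sum equal to $\sigma_1(A)$, whence $\sigma_k(\sms As)=\sigma_1(A)^k$ contains no $\sigma_k(A)$ term for $k\gte 2$, forcing $\fsk sk(s)=0$.
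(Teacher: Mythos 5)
The paper offers no proof of this proposition --- it is one of three facts explicitly ``left to the reader'' at the end of Section 5 --- so there is nothing in the text to compare against; judged on its own merits, your argument is correct. The induction on $k$ works: the base case $k=2$ is immediate from $\fsk s2(x)=\binom{x-2}{s-1}=\frac{1}{(s-1)!}\prod_{p=2}^{s}(x-p)$, and in the inductive step the two terms of the recurrence $\fsk sk(x)=s\,\fsk s{k-1}(x)-x\,\fsk{s-1}{k-1}(x-1)$ are each divisible by $\prod_{p=k}^{s}(x-p)$ --- the first because the hypothesis gives the larger factor $\prod_{p=k-1}^{s}(x-p)$, the second because the shift $x\mapsto x-1$ exactly compensates for the drop $s\mapsto s-1$, as you correctly observe (and the needed case $s'=s-1\gte k-1$ of the hypothesis is available precisely when $s\gte k$, the only nonvacuous case). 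Your only dependency is the second recurrence, which the paper also states without proof but which is easily checked from $(s-p)\binom{x}{s-p}=x\binom{x-1}{s-p-1}$. The alternative route you sketch is also sound and arguably cleaner: for an integer $n$ with $k\lte n\lte s-1$ the substitution $q=s-p$ exhibits $\fsk sk(n)$ as $\pm$ the $n$-th finite difference of the degree-$(k-1)$ polynomial $(s-q)^{k-1}$, which vanishes; in fact the endpoint $n=s$ needs no separate ``semantic'' argument about $\sigma_k(\sms As)$, since the extra term $q=s$ contributes $(s-s)^{k-1}\binom ss=0$ for $k\gte 2$ and the same difference identity applies. Either version is a complete proof.
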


\begin{prop}
Prove that if $n\gte k > 1$, then $\fsk sk(n) = (-1)^k\fsk {n-s}k(n)$.
\end{prop}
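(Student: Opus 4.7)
The plan is to reduce the stated identity to the classical finite-difference vanishing
$\sum_{j=0}^{n}(-1)^{j}P(j)\binom{n}{j} = 0$, which holds for every polynomial $P$ of degree strictly less than $n$. This identity is really the only non-bookkeeping ingredient; all the rest is a careful reindexing.

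First I would reindex both sides by collecting terms according to the lower argument of the binomial coefficient. In the defining sum for $\fsk{s}{k}(n)$ substitute $j = s - p$; in $\fsk{n-s}{k}(n)$ first rewrite $\binom{n}{n-s-p} = \binom{n}{s+p}$ and then substitute $j = s+p$. This produces
$$
\fsk{s}{k}(n) = \sum_{j=0}^{s-1}(-1)^{s-j-1}(s-j)^{k-1}\binom{n}{j}, \qquad
\fsk{n-s}{k}(n) = \sum_{j=s+1}^{n}(-1)^{j-s-1}(j-s)^{k-1}\binom{n}{j}.
$$
Multiplying both by the common factor $(-1)^{s+1}$, the asserted identity $\fsk{s}{k}(n) = (-1)^{k}\fsk{n-s}{k}(n)$ becomes
$$
\sum_{j=0}^{s-1}(-1)^{j}(s-j)^{k-1}\binom{n}{j} \;=\; (-1)^{k}\sum_{j=s+1}^{n}(-1)^{j}(j-s)^{k-1}\binom{n}{j}.
$$

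Next I would use the trivial relation $(s-j)^{k-1} = (-1)^{k-1}(j-s)^{k-1}$ together with the observation that, because $k > 1$, the $j = s$ summand vanishes. These two facts let me recognize the left- and right-hand sides above as precisely the two halves, split at $j=s$, of the single alternating sum
$$
\Sigma \;=\; \sum_{j=0}^{n}(-1)^{j}(s-j)^{k-1}\binom{n}{j}.
$$
A one-line rearrangement shows that the rearranged identity is equivalent to $\Sigma = 0$.

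Finally, $P(j) := (s-j)^{k-1}$ is a polynomial in $j$ of degree $k-1$, and the hypothesis $k \leq n$ guarantees $\deg P < n$; hence $\Sigma$ equals (up to a sign) the $n$th forward difference of $P$ evaluated at $0$ and therefore vanishes. The only real obstacle is the sign and index-range bookkeeping in the reindexing step — once the two sums are lined up against a common variable $j$, the collapse to the textbook identity is immediate.
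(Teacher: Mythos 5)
Your proof is correct: the reindexing and sign bookkeeping check out, $k>1$ is used exactly where needed (to make the $j=s$ term vanish so the two half-sums merge into $\sum_{j=0}^{n}(-1)^{j}(s-j)^{k-1}\binom{n}{j}$), and $n\gte k$ gives $\deg\bigl((s-j)^{k-1}\bigr)=k-1<n$ so the standard finite-difference identity applies. The paper leaves this proposition as an exercise, but your argument is precisely the technique it uses to prove the nearby proposition that $\fsk sk(2s)=0$ for odd $k$ (which is the special case $n=2s$ of your identity), so this is essentially the paper's own approach.
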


\section{Computer to the rescue}
\label{sec:Computer_Results}

\subsection*{Roots of \texorpdfstring{$\fsk sk$}{F\_ sk}}

Trying to find more roots of Moser polynomials for cases $s > 4$ in hope of some insight, I have written a short program in \SAGE{} which was then run through SageMath web interface at \href{http://cocalc.com}{CoCalc.com} for $s=3,4,5,6,7$, etc. until the server started to stumble (which happened somewhere around $s=40$). After that I have switched to a local install of \SAGE{} and proceeded until $s=\LastS$ when every new value of $s$ started to require almost a day to process (and then my computer ran out of operational memory).

The program did the following: for every fixed value of $s$ it ran the loop for $k$ from 1 to 1000, where at each step it computed polynomial $\fsk sk$, factored it over $\mbz$ and in the case of nontrivial factorization printed out the roots of the polynomial. At the end it also produced $k_{\max}(s)$---the last value of $k$ for which a nontrivial factorization of $\fsk sk$ occurred.

Below (in Table \ref{table_nsk}) you can see the summary of all nontrivial roots (with pairs $(2s,s)$ excluded) obtained from this experiment.

{
\newcommand{\km}[1]{${}^{[#1]}$}
\begin{table}[ht]
\begin{tabular}{| l | l |}
\hline \phn
  $s$ & \quad $n$ \\
\hline \phn
  3 & 27\km5, 486\km9 \\
\hline \phn
  4 & 12\km6 \\
\hline \phn
  6 & 8\km4, \textbf{27}\km4 \\
\hline \phn
  8 & 12\km6\\
\hline \phn
  10 & \textbf{32}\km6 \\
\hline \phn
  14 & 16\km5, \textbf{147}\km5 \\
\hline \phn
  21 & \textbf{27}\km4, \textbf{98}\km4 \\
\hline \phn
  22 & \textbf{32}\km6\\
\hline \phn
  24 & 27\km5, \textbf{256}\km5 \\
\hline \phn
  30 & 32\km6 \\
\hline \phn
  62 & 64\km7 \\
\hline \phn
  77 & \textbf{98}\km4, \textbf{363}\km4 \\
\hline \phn
  126 & 128\km8 \\
\hline \phn
  133 & \textbf{147}\km5, \textbf{ 1444}\km5 \\
\hline
\end{tabular}
\medskip
\caption{Nontrivial roots of $\fsk sk$ for $3 \lte s \lte \LastS$}
\label{table_nsk}
\end{table}
}

We have marked each entry $n$ with the first value of $k$ for which $\fsk sk(n)=0$. So, for instance, mark ${}^{[4]}$ corresponds to chain \eqref{chain4}, and ${}^{[5]}$---to chain \eqref{chain5}.

For all other values of $s$ between 3 and $\LastS$ the only roots found were either $n=2s$ (which would have been marked with ${}^{[3]}$) or trivial (1 through $s$) and therefore of no interest for us.

Roots of $\fsk sk$ which have not been verified yet as multiset recovery singularities are emphasized in \textbf{bold}. They represent the current ``suspect'' cases.

In addition, the experiment showed that for all $3 < s \lte \LastS$ the value of $k_{\max}(s)$ was equal to $2s-1$. Claiming that to be always true is what we will call the \textbf{$k_{\max}$-Conjecture}---see \textbf{Conjecture \ref{th:prob_kmax}} below, in \textbf{Section \ref{sec:Open_Questions}}.

The following proposition can be considered as a very easy ``half'' of this conjecture.

\begin{prop}
\label{th:oddk_Fsk2s}
For any $s>2$, $n=2s$, and any odd $k$ such that $1 < k < n$ we have $\fsk sk(n) = 0$. 
\end{prop}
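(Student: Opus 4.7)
The plan is to derive the conclusion immediately from the duality identity stated in Proposition~3, namely
$$
\fsk sk(n) = (-1)^k\fsk{n-s}k(n) \quad \text{for } n \gte k > 1.
$$
Specializing to $n = 2s$, we have $n - s = s$, so the right-hand side becomes $(-1)^k \fsk sk(2s)$. Thus the identity collapses into the self-referential equation
$$
\fsk sk(2s) = (-1)^k\fsk sk(2s).
$$
When $k$ is odd, $(-1)^k = -1$, and this reads $\fsk sk(2s) = -\fsk sk(2s)$, forcing $\fsk sk(2s) = 0$. That is the entire argument in one line.

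The only book-keeping is to check the hypothesis of Proposition~3 is met: we are given $1 < k < n$, so in particular $k > 1$ and $n > k$, hence certainly $n \gte k > 1$, so the identity applies. Note also that the hypothesis $s > 2$ plays no essential role in the vanishing itself — it is present in the statement only to ensure that the pair $(n,s) = (2s,s)$ is actually an interesting ``suspect'' case in the sense of the Moser polynomial theory.

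If for some reason one wanted to avoid invoking Proposition~3, the same vanishing can be obtained directly from the defining formula
$$
\fsk sk(2s) = \sum_{p=1}^{s}(-1)^{p-1}p^{k-1}\binom{2s}{s-p}
$$
by exploiting the symmetry $\binom{2s}{s-p} = \binom{2s}{s+p}$: one re-indexes the sum so as to realize $\fsk sk(2s)$ as a signed sum over the ``other half'' of the binomial coefficients and then compares the two expressions. This is really just a combinatorial unpacking of Proposition~3 in this special case.

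There is no genuine obstacle here — the statement is an immediate corollary of a more general duality for Moser polynomials, and the parity assumption on $k$ is precisely what one needs to convert the sign $(-1)^k$ into the cancellation $1 = -1$ that forces the value to be zero.
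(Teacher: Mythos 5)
Your main argument is correct and takes a genuinely different route from the paper. You reduce everything to the duality $\fsk sk(n) = (-1)^k\fsk{n-s}k(n)$, which at $n=2s$ becomes the self-referential identity $\fsk sk(2s) = (-1)^k\fsk sk(2s)$ and forces vanishing for odd $k$; the hypothesis check ($n \gte k > 1$) is fine, and you are right that $s>2$ is not needed for the vanishing itself. The paper instead gives a self-contained computation: it appends the $p=0$ term, reverses the index, uses $\binom np = \binom n{n-p}$ to extend the sum over $p=0,\dots,n$, expands $(s-p)^{k-1}$ in falling factorials $p^{[j]}$, and invokes $\sum_{p=0}^n(-1)^p p^{[j]}\binom np = 0$ for $j<n$ (proved via derivatives of $(1+x)^n$ at $x=-1$). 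What your route buys is brevity and a conceptual explanation (the statement is literally the odd-$k$ fixed-point case of the $s \leftrightarrow n-s$ duality); what it costs is self-containment, since the paper states that duality only as an unproved exercise at the end of Section 5, so your proof is complete only modulo supplying a proof of that identity.

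Your fallback ``direct'' sketch, however, has a real gap. The symmetry $\binom{2s}{s-p} = \binom{2s}{s+p}$ by itself only rewrites the same sum over the other half of the binomial coefficients --- comparing the two expressions yields a tautology, not a cancellation. To get vanishing you must combine the two halves into the full alternating sum $\sum_{j=0}^{2s}(-1)^j(s-j)^{k-1}\binom{2s}{j}$ (here oddness of $k$ is what makes the $\pm p$ terms reinforce rather than cancel) and then appeal to the fact that this sum is zero because $(s-j)^{k-1}$ has degree $k-1 < 2s$. That last step is exactly the finite-difference identity at the heart of the paper's proof, and it does not follow from the binomial symmetry alone; as written, your second argument never supplies it.
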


\begin{proof}
We can rewrite this statement by using \eqref{eq:F_skn}, adding summand with $p=0$, and reversing the summation index $p$. As a result we obtain
\begin{equation*}
\sum_{p=0}^s (-1)^{p}(s-p)^r\binom{n}{p} = 0
\end{equation*}
for any even number $0 < r < n$.

Now, since 
$$
\binom{n}{p} = \binom{n}{n-p}, \quad
(s-p)^r = (s-(n-p))^r \pc 
$$
the equation above is equivalent to
\begin{equation*}
\sum_{p=0}^n (-1)^{p}(s-p)^r\binom{n}{p} = 0 \pp
\end{equation*}

Any polynomial in $p$ of degree less than $n$ (such as $(s-p)^r$) can be expressed as a linear combination of ``falling powers'' polynomials $p^{[j]}$, $j=0,\ldots, n-1$, where $p^{[j]} = p\cdot(p-1)\cdot \ldots \cdot(p-j+1)$ (another commonly used notation for that expression is $(p)_j$). Our proposition then follows from well-known formula
$$
\sum_{p=0}^n (-1)^{p}p^{[j]}\binom{n}{p} = 0 \pc
$$
which can be easily proved using the generating function $\lambda(x)=(1+x)^n=\sum_{p=0}^n \binom{n}{p}x^p$. Polynomial $\lambda(x)$ has $-1$ as a root of order $n$; thus its $j$-th derivative $\lambda^{(j)}(x)=\sum_{p=0}^n \binom{n}{p}p^{[j]}x^{p-j}$ also must have $-1$ as a root for any $0 \lte j < n$.

\end{proof}

Finally, from $\fsk s{2s-1}(2s) = 0$ follows
\begin{corollary}
For any $s>2$ we have $k_{\max}(s) \gte 2s-1$.
\end{corollary}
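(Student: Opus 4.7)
The plan is to deduce this immediately from the preceding Proposition \ref{th:oddk_Fsk2s}. First I would verify that the hypotheses of that proposition are met with the specific choice $k = 2s - 1$ and $n = 2s$. Since $s > 2$, we have $2s - 1$ odd and $1 < 2s - 1 < 2s = n$, so the proposition applies and yields $\fsk s{2s-1}(2s) = 0$.

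Next I would argue that this vanishing produces a genuinely \emph{nontrivial} factorization of $\fsk s{2s-1}(x)$ over $\mbz$, in the sense used to define $k_{\max}(s)$. By the divisibility proposition stated just before, $\fsk sk(x)$ is divisible by $\prod_{p=k}^{s}(x-p)$; the ``trivial'' roots are those coming from this product. But for $k = 2s - 1$ we have $k > s$ (since $s > 2$ gives $2s - 1 > s$), so the product $\prod_{p=2s-1}^{s}(x-p)$ is empty, meaning $\fsk s{2s-1}$ has no trivial roots at all. Hence the root $n = 2s$ guaranteed by the proposition is automatically nontrivial, and $\fsk s{2s-1}$ admits a nontrivial factorization over $\mbz$.

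By the definition of $k_{\max}(s)$ as the largest $k$ for which $\fsk sk$ factors nontrivially, this directly gives $k_{\max}(s) \gte 2s - 1$, completing the proof.

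There is no real obstacle here: the work was done in Proposition \ref{th:oddk_Fsk2s}, and the corollary is essentially a bookkeeping statement. The only point worth emphasizing, and the only place one could slip, is checking that the factor $(x - 2s)$ really is nontrivial, which is ensured by the fact that the range $p = k, k+1, \ldots, s$ is empty once $k > s$.
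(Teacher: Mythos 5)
Your proof is correct and takes essentially the same route as the paper, which deduces the corollary in one line by applying Proposition \ref{th:oddk_Fsk2s} with the odd value $k=2s-1$ and $n=2s$ to get $\fsk s{2s-1}(2s)=0$. Your extra bookkeeping about trivial roots is harmless but not really needed: $k_{\max}(s)$ is defined via nontrivial factorization of $\fsk sk$ over $\mbz$, and any integer root supplies a linear factor of the degree-$(s-1)$ polynomial, which is a nontrivial factorization once $s>2$.
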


\subsection*{Singularity search}

Well, since we already started using computer assistance, let us continue down this slippery slope. The next idea in automating our investigation is to hunt not for the roots of polynomials $\fsk sk$ but for the singular multisets themselves.

The objective is to try and find singularity examples for the smallest ``suspect'' pairs $(27, 6)$ and~$(32, 10)$. The other suspects, not $n$-conjugated to these two, are too large to hope for any ``brute force'' computer search to succeed.

The main idea of this approach is to restrict the realm of the $n$-multisets that we deal with. 
Consider all $\binom{n+m-1}{m-1}$ \textit{weak compositions} of $n$ into $m$ parts, that is, representations of $n$ as a sum of $m$ non-ne\-ga\-tive integers $k_i$, $(i = 1, 2, \ldots, m)$:
\begin{equation*}
\label{eq:wcs_n_m}
\mcp:\ n = k_1+k_2+\ldots+k_m,\quad k_i\in \mbz_{\gte0} \pp
\end{equation*}

Each weak composition of this form can be treated as sequence of multiplicities---that is, from each composition $\mcp$ we will construct $n$-multiset
\begin{equation}
\label{eq:msets_n_m}
A_{\mcp} = \{1^{k_1},2^{k_2},\ldots,m^{k_m}\}\pp
\end{equation}

Alas, in both cases $(27, 6)$ and $(32, 10)$ we cannot hope to find $n$-multiset $X$ which is $s$-equivalent to its own mirror $\tilde{X}$. Indeed, if $X \eqs s \tilde{X}$, then without loss of generality we can assume that $\sigma_1(X)=0$. Thus $\tilde{X} = -X$ and for any even $k > 0$ the sum of $k$-th powers of numbers in $X$ will be equal to that of $\tilde{X}$.

From the experiment we already know (and can easily verify this formally) that $\fsk 6k(27) = 0$ if and only if $k=4$, and $\fsk {10}k(32) = 0$ if and only if $k=6$. Therefore, for any $X$ such that $\sms Xs = \sms{\tilde{X}}s$ we will have $\sigma_k(X) = \sigma_k(\tilde{X})$ for all values of $1\lte k \lte n$---the only exceptions we could have hoped for were 4 and 6 (for $n=27$ and $n=32$ respectively) and we have just eliminated them. 

\smallskip

So, how can we proceed and what are the challenges?

First of all, we cannot afford to generate an array of all weak compositions (or multisets of type \eqref{eq:msets_n_m}) and then analyze the result---the computer would soon run out of memory.  For example, if $n=27$ and $m=10$, then we get almost 100 million of such compositions (94,143,280 to be exact).

Thus, the algorithms here have to be iterative. It is fairly easy to write an iterator function which generates next weak composition based on the previous one. \textit{Hint:} find the last nonzero part, increase the previous part by one and make all the following parts zero except for the last one. If necessary, the same can be done when going through all $s$-subsets in an $n$-multiset.

Second, the check function that verifies whether the two given multisets $A$ and $B$ are $s$-equivalent must be written very carefully and very efficiently because it will be called quite a few times. To make it work as fast as possible the function needs to implement some ``quick rejection'' checks. For instance, the sum of the first $s$ numbers from $A$ (let us assume it is sorted) is always equal to the minimum number in $\sms As$; hence these sums for $A$ and $B$ must coincide. The more simple checks of this sort are employed the better.

Third, calling this check function for every pair $A_{\mcp}$ and $A_{\mcp'}$ of constructed multisets is absolutely out of the question. So, some sort of simplified ``signature'' has to be computed for each multiset $\sms{A_{\mcp}}s$ (alas, no quick rejections there) so we can compare these numbers instead of comparing very large multisets $\sms{A_{\mcp}}s$. But even with that we cannot go much farther beyond $m=10$ for the reason I already mentioned above---such huge arrays of data will exhaust the computer memory.

My own implementation of this approach did not find any examples of 6-equivalent 27-multisets of type \eqref{eq:msets_n_m} for $m < 10$. As a sanity check I ran the same code for the pair $(12,4)$ with $m=17$, and after a few hours of number crunching it resulted in the same unique example of two 4-equivalent 12-multisets already found in \cite{IsoKok}.

Clearly, absence of positive results in this computational experiment doesn't mean much as  there could be 6-equivalent 27-multisets that span longer stretches of integers. And it is always possible that singularity example for $(27,6)$ simply doesn't exist. For now, this remains an open question. 

If some of the readers become interested in this line of investigation I will gladly send them my code---I am sure it can be made more effective while consuming less memory. And then, who knows, perhaps the next value of $m$ will finally yield the desired singularity.

\section{Open questions}
\label{sec:Open_Questions}

Here is a list of a few open questions which have come up during this survey's investigations.


\begin{question}
\label{th:prob_suspects}
Which of the new ``suspect'' pairs $(n,k)$ found in \textbf{Section \ref{sec:Computer_Results}} are singular? 
\end{question}

Of course, we are talking here about pairs highlighted in bold in Table \ref{table_nsk}. Perhaps some cleverly written computer program could answer this question at least for the smallest ``suspect'' pairs $(27,6)$ and $(32,10)$.

\begin{question}
\label{th:root_means_singular}
Does a nontrivial integer root of Moser polynomial always provide us with a singular pair? That is, is it true, for any positive integers $s$, $k$, $n$ such that $n>s$, $n\gte k$, and $\fsk sk(n) = 0$, that there exists a pair of different $n$-multisets $A$ and $B$ such that $\sms As = \sms Bs$?
\end{question}

All the results accumulated over the last sixty years so far confirm this hypothesis; however, it looks like an extremely difficult nut to crack. The following question could perhaps serve as a small step in this direction.

\begin{question}
\label{th:conjugation_singular}
A singular pair $P = (n, s)$ is such that $\fsk sk(n) = 0$ for $k=4$ or $5$. Is pair $P' = (m,s)$ obtained from $P$ by $\langle s,k\rangle$-conjugation also singular?
\end{question}

We know that $n$-conjugation between roots of $\fsk sk$ has its direct analog in the $(n,s) \leftrightarrow (n, n-s)$ duality between singular pairs. However, the similar question about $s$-conjugation does not seem to be even remotely as simple. 

\begin{question}
\label{th:math_basis}
Is there a less ``accidental'' explanation for at least one singular pair with $n\neq 2s$, $s>2$?
\end{question}

So far, all such examples were constructed in a rather \textit{ad hoc} manner by grinding through solutions for simultaneous equations $\sigma_k(\sms As) = \sigma_k(\sms Bs)$ in cases where resulting polynomials were not overwhelmingly complex. Perhaps for at least some singular pairs there exists a less ``accidental'' construction, combinatorial or algebraic. 

\smallskip

The following hypothesis seems to be the most important open question about the roots of Moser polynomials.

\begin{conjecture}
\label{th:prob_kmax} \textbf{($k_{\max}$-Conjecture)}
Is it true that $k_{\max}(s) = 2s-1$ for all $s>3$? In other words, prove or disprove that  polynomial $\fsk sk$ can have integer roots only if $k \lte 2s-1$.
\end{conjecture}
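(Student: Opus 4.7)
Proof proposal. The plan is to attack the nontrivial direction $k_{\max}(s)\lte 2s-1$: for $s>3$ and $k\gte 2s$, the Moser polynomial $\fsk sk(x)$ has no integer root outside the trivial set $\{1,\dots,s\}$. First, I would re-express $\fsk sk$ in a form that isolates the $k$-dependence. Writing $p^{k-1}$ in the falling-factorial basis $p^{[j]}=p(p-1)\cdots(p-j+1)$ and using the Vandermonde-type identity
\[
\sum_{q\gte 0}(-1)^{q}\binom{j+q}{q}\binom{n}{s-j-q}=\binom{n-j-1}{s-j}
\]
(an immediate consequence of multiplying $(1+x)^n$ by $(1+x)^{-j-1}$) to collapse the inner sum, one obtains for $k\gte 2$
\[
\fsk sk(n)=\sum_{j=1}^{s}(-1)^{j-1}\,j!\,S(k-1,j)\binom{n-j-1}{s-j},
\]
where $S(k-1,j)$ is a Stirling number of the second kind. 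The leading $j=s$ summand equals $(-1)^{s-1}s!\,S(k-1,s)$, i.e.\ $(-1)^{s-1}$ times the number of surjections from a $(k-1)$-set onto an $s$-set, and grows like $s^{k-1}$; meanwhile the $j=1$ summand equals $\binom{n-2}{s-1}$ and grows only polynomially in $n$.

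Second, balancing these two dominant contributions in $\fsk sk(n)=0$ shows that any integer root must satisfy $n\lte C(s)\cdot s^{(k-1)/(s-1)}$ for an explicit constant $C(s)$. The intermediate terms ($2\lte j\lte s-1$), which are of order $j^{k-1}\,n^{s-j}/(s-j)!$, can be absorbed by a Newton-polygon / convex-combination estimate on the vertices of the family $\{(s-j,(k-1)\log j):j=1,\dots,s\}$. This reduces the search to an explicit finite range of candidate roots for each fixed pair $(s,k)$.

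Third, rule out these candidates via induction on $s$, using the recurrence $\fsk sk(x)=s\,\fsk s{k-1}(x)-x\,\fsk {s-1}{k-1}(x-1)$ from Section \ref{sec:Roots_Fsk}. Assuming the conjecture for $s-1$, the polynomial $\fsk{s-1}{k-1}$ has no integer roots outside $\{1,\dots,s-1\}$ whenever $k-1\gte 2(s-1)$, so any putative root $n$ of $\fsk sk$ either lies in a small exceptional set or satisfies $n\,\fsk{s-1}{k-1}(n-1)=s\,\fsk s{k-1}(n)$ with both sides nonzero. Iterating this identity downward from $k$ to $2s-1$ and tracking $p$-adic valuations via Kummer's theorem for the binomial factors $\binom{n-j-1}{s-j}$ together with the classical divisibility properties of the surjection numbers $j!\,S(k-1,j)$ should, for $k\gte 2s$, produce a congruence modulo some prime $q\lte s$ that no $n$ in the admissible range can satisfy.

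The main obstacle is the third step. Converting the size bound of Step~2 into a genuine nonexistence result requires a uniform arithmetic obstruction over an exponentially large window of candidate $n$'s, and no single prime modulus can serve universally: $p^{k-1}\pmod q$ depends on $k$ through the order of $p$ in $(\mbz/q\mbz)^\ast$, so the obstructing modulus must be tailored to each residue class of $k$. In effect the problem becomes a genuine exponential Diophantine equation in the pair $(n,k)$, and a fully uniform proof likely requires Baker-type lower bounds on linear forms in logarithms, yielding an effective but astronomically large bound on $k$, after which a finite computer check extending the experiment of Section \ref{sec:Computer_Results} would complete the argument. The anomalous case $s=3$ --- where $\fsk 39(486)=0$ violates the conjectured pattern so that $k_{\max}(3)=9$, not $5$ --- shows that any proof must use the hypothesis $s>3$ in an essential way, presumably through the absence of further integer solutions to the quadratic conjugation equations of Section \ref{sec:Roots_Fsk} beyond the chains \eqref{chain4} and \eqref{chain5}.
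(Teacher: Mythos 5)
First, a point of order: in the paper this statement is not a theorem but an open conjecture, so there is no ``paper's proof'' to compare yours against. The only part the paper actually establishes is the easy inequality $k_{\max}(s)\gte 2s-1$ (via Proposition \ref{th:oddk_Fsk2s} and its corollary), together with a computational verification of the equality for $3<s\lte\LastS$. Your proposal therefore has to be judged as a purported proof of an open problem, and as such it has a genuine gap. Step 1 is correct and worth keeping: the identity
\[
\fsk sk(n)=\sum_{j=1}^{s}(-1)^{j-1}\,j!\,S(k-1,j)\binom{n-j-1}{s-j}\qquad(k\gte 2)
\]
does follow from \eqref{eq:F_skn} by expanding $p^{k-1}$ in falling factorials and applying the Vandermonde-type convolution you quote, and it specializes correctly (e.g.\ to $\fsk s2(n)=\binom{n-2}{s-1}$). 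But from Step 2 onward nothing is proved. The size bound on integer roots is asserted from a comparison of the $j=1$ and $j=s$ terms alone, yet the summands alternate in sign and the intermediate terms $j!\,S(k-1,j)\binom{n-j-1}{s-j}$ can individually dwarf both endpoints in exactly the range where a root could sit; the ``Newton-polygon / convex-combination estimate'' that is supposed to control them is named but never carried out.

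Step 3, which is where nonexistence of roots would actually have to be established, is explicitly conditional: you write that the descent through the recurrence ``should'' produce a congruence obstruction and that a uniform treatment ``likely requires'' Baker-type bounds. No modulus is exhibited, no linear form in logarithms is estimated, and no effective bound on $k$ is derived, so the finite computation that is supposed to finish the argument cannot even be specified. The proposed induction on $s$ also fails at its base: the recurrence $\fsk sk(x)=s\,\fsk s{k-1}(x)-x\,\fsk{s-1}{k-1}(x-1)$ ties $s=4$ to $s=3$, and $s=3$ is precisely the anomalous case you flag at the end ($\fsk 39$ has the roots $27$ and $486$, so $k_{\max}(3)=9>2\cdot3-1$); this injects exceptional candidate roots such as $n=28$ and $n=487$ into the inductive step for $\fsk 4{10}$ that your argument does not address. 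In short, Steps 2 and 3 constitute a research program rather than a proof, and the conjecture remains open.
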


A positive answer to this question would be a considerable breakthrough in any search for the roots of Moser polynomials, computer-aided or otherwise. It would also give us solutions to various parts of Multiset Recovery Problem. 

For starters, we could immediately claim that $\mcm_s = \{2s\}$ for many small values of~$s$. For instance, the proof for $s=5$ would go like this:

\begin{proof}
If $n>5$ and $n\neq 10$, then $\fsk 5k(n) \neq 0$ for any $k>0$. Indeed, the \textbf{$k_{\max}$-Conjecture} implies there is no need to check values $k\gte 10$. From \textbf{Section \ref{sec:Roots_Fsk}} we know the same is true for $k\lte 5$. So we only need to examine $\fsk 56$, $\fsk 57$, $\fsk 58$, and $\fsk 59$:
\begin{align*}
\fsk 56(n) &= \frac{1}{24}\left(n^4 - 134 n^3 + 3311 n^2 - 27754 n + 75000\right) \pc \\
\fsk 57(n) &= \frac{1}{24}\left(n^4 - 262 n^3 + 9527 n^2 - 107570 n + 375000\right) \pc \\
\fsk 58(n) &= \frac{1}{24}\left(n^4 - 518 n^3 + 27791 n^2 - 420490 n + 1875000\right) \pc \\
\fsk 59(n) &= \frac{1}{24}\left(n^4 - 1030 n^3 + 81815 n^2 - 1653650 n + 9375000\right) \pp
\end{align*}

How do we do that? Again, we can use a computer to help us produce a verifiable computer-independent proof. As an example, let us prove (quite formally) that $\fsk 56$ has no integer roots in $[5; \infty)$. A couple of lines of code in \MLAB{} or in \SAGE{} will get us real roots of this polynomial:
$$
x_1 = 6.014875\ldots,\ x_2 = 7.745287\ldots,\ x_3 = 15.348149\ldots,\ x_4 = 104.891687\ldots \pp
$$

We cannot use that as a proof, but we can compute \textit{by hand} values $\fsk 56(x)$ for $x = 6$, 7, 8, 15, 16, 104, and 105. The results---1, (-25), 15, 85, (-199), (-31065), and 3895---prove that there is a non-inte\-ger root inside each one of intervals $[6,7]$, $[7,8]$, $[15,16]$, and $[104,105]$. Those four non-inte\-ger numbers obviously constitute the set of all roots of $\fsk 56$.

Same reasoning (but with longer computations) does it for polynomials $\fsk 57$, $\fsk 58$, and $\fsk 59$ as well ($\fsk 57$ and $\fsk 59$ both have one integer root but it is equal to 10).

An alternative way to prove the absence of nontrivial roots is to use residues modulo prime $p=13$ for both polynomials $\fsk 56$ and $\fsk 58$. Polynomial $24\fsk 57$ can be factored as $(n-10)(n^3 - 252n^2 + 7007n - 37500)$, and the absence of integer roots other than 10 can be proved using $p=23$. For $k=9$ we have $24\fsk 59 = (n-10)(n^3 - 1020n^2 + 71615n - 937500)$, and once again, residues modulo $p=13$ do the trick.

Finally, since $\fsk 5k(n)\neq 0$, then from \textbf{Theorem \ref{th:F_skn}} it follows that $n$-multiset $A$ is always recoverable from~$\sms A5$.\footnote{\ Due to the results in \cite{AmZel}, case $s=5$ is already complete. However, the readers can apply the same ideas to cases $s=6$ and $s=7$, proving, for instance, that polynomials $\fsk 78$ through $\fsk 7{13}$ have no integer roots.}
\end{proof}

Proving this conjecture would also immediately provide us with the full resolution of Multiset Recovery Problem for cases $s=7$, 8, and 9, as well as for many other values of~$s$.

\smallskip

One other hypothesis proposes an update to \textbf{Question \ref{th:prob_mtx}}.

\begin{conjecture}
\label{th:prob_mtgx} \textbf{(Magical Triplet Conjecture)}
If $s>2$ and $n>2s$, then for any three distinct $n$-multisets some two of them are not $s$-equivalent to each other.
\end{conjecture}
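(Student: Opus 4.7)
The plan is to use the Moser-polynomial machinery from \textbf{Section \ref{sec:Sym_Poly}} to funnel the possible differences between pairwise $s$-equivalent multisets into a small set of ``free'' power-sum coordinates, and then exploit integrality to show that the resulting algebraic fibers cannot support three distinct integer multisets. The first step is essentially automatic. After translating so that $\sigma_1(A) = \sigma_1(B) = \sigma_1(C) = 0$, the identity
\begin{equation*}
\sigma_k(X^{(s)}) = \fsk{s}{k}(n)\,\sigma_k(X) + \mathcal P(\sigma_1(X), \ldots, \sigma_{k-1}(X))
\end{equation*}
from \eqref{eq:sigma_As_poly}, together with induction on $k$, shows that if $\sigma_j(X) = \sigma_j(Y)$ for all $j < k$ then $\sigma_k(X) = \sigma_k(Y)$ whenever $\fsk{s}{k}(n) \neq 0$. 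Consequently, the smallest index at which any two of the three power-sum sequences disagree must lie in the zero set $Z = \{k \in [2,n] : \fsk{s}{k}(n) = 0\}$, and for $k \notin Z$ the difference $\sigma_k(X) - \sigma_k(Y)$ is determined as a polynomial expression in the lower-order differences. In power-sum coordinates, the fiber of $R_{n,s}$ through $A$ is therefore an algebraic variety of dimension at most $|Z|$, parametrized by the free coordinates $\{\sigma_k : k \in Z\}$.

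The second step is to convert the conjecture into an algebraic counting problem. For every suspect pair $(n,s)$ with $n > 2s$ and $s > 2$ appearing in \textbf{Table \ref{table_nsk}} and the chains \eqref{chain4} and \eqref{chain5}, the accumulated evidence is that $|Z|$ is $1$ or at worst $2$. In the base case $Z = \{k_0\}$ the fiber is a one-parameter curve $t \mapsto (\sigma_j(t))_j$ with each $\sigma_j(t) \in \mathbb Q[t]$; via Newton's identities the monic polynomial $P(x;t) = \prod_i(x - a_i)$ then has coefficients in $\mathbb Q[t]$, and each integer $n$-multiset in the fiber corresponds to a value $t = t_0$ at which $P(x; t_0)$ splits over $\mbz$ into $n$ linear factors. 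A hypothetical triplet $A, B, C$ would provide three such values of $t_0$, and the goal is to show that there are at most two. Since the mirror involution $A \mapsto \tilde A$ always furnishes the second splitting value (all known singularity examples with $n > 2s$, $s > 2$ are mirror pairs), I would attempt to rule out a third by studying the generic factorization of $P(x;t)$ over $\mathbb Q(t)$ and coupling it with mirror symmetry: either $P(x;t)$ is generically irreducible and a Hilbert-irreducibility-style argument bounds totally split specializations, or it admits a nontrivial generic factorization whose partition of roots is forced to coincide with the mirror-pair partition.

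The hardest part is twofold. First, the reduction to a one-dimensional fiber rests on the structural claim that $|Z|$ is small for all $n > 2s$, $s > 2$---morally an analogue of the $k_{\max}$-Conjecture (\textbf{Conjecture \ref{th:prob_kmax}})---because a higher-dimensional fiber could \emph{a priori} host arbitrarily many integer multisets, as Ewell's $\mcf_3(6) = 4$ demonstrates in the excluded case $n = 2s$. Second, even granting $|Z| = 1$, excluding a third totally split specialization of $P(x;t)$ outside the mirror pair appears to require Diophantine input that is genuinely stronger than the Moser-polynomial formalism alone: the curve $t \mapsto P(x;t)$ is given by explicit polynomial data, but the ``at most two integer split points'' claim seems to need something like an effective bound on rational points on its associated discriminant subvariety. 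I expect that the first handful of suspect cases---$(27, 6)$ and $(32, 10)$---will have to be resolved by direct computation before a uniform mechanism emerges, and that the uniform mechanism itself is likely to depend on progress toward \textbf{Conjecture \ref{th:prob_kmax}}.
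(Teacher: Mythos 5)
This statement is a \emph{conjecture}: the paper offers no proof of it and explicitly lists it among the open problems in \textbf{Section \ref{sec:Open_Questions}}, so there is no ``paper's own proof'' to compare against. What you have written is a research programme rather than a proof, and by your own admission its two load-bearing steps are unproven. Concretely: (i) the reduction to a low-dimensional fiber requires that the zero set $Z=\{k : \fsk sk(n)=0\}$ be small for \emph{all} $n>2s$, $s>2$, which is at least as strong as the open $k_{\max}$-Conjecture (\textbf{Conjecture \ref{th:prob_kmax}}) and in fact needs more, since bounding the range of admissible $k$ does not by itself bound $|Z|$; and (ii) even granting $|Z|=1$, a Hilbert-irreducibility-style argument cannot deliver the conclusion you need. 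Hilbert irreducibility shows that the set of specializations $t_0$ at which $P(x;t_0)$ splits completely is \emph{thin}, but thin sets can be infinite; nothing in that machinery yields ``at most two'' split integer points, which is the entire content of the conjecture. The first step of your argument (the power sums of any two $s$-equivalent multisets can first disagree only at an index in $Z$, and all subsequent differences are then determined) is correct and is exactly the standard reduction from \textbf{Section \ref{sec:Sym_Poly}}, but it only localizes the problem; it does not count fibers.

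There is also a factual error in your supporting heuristic: it is not true that all known singularity examples with $n>2s$, $s>2$ are mirror pairs. The Isomurodov--Kokhas example for $(12,4)$ consists of two multisets $A=\{1^2,4,6,7,8^2,9,10,12,15^2\}$ and $B=\{0,3,4,5,6,7,9,10,11,12,13,16\}$, each of which is symmetric about its mean $8$ and hence equal to its own mirror; the pair is therefore \emph{not} produced by the mirror involution. So the proposed dichotomy ``the second split point is always the mirror image, rule out a third'' starts from a false premise. The honest status of your proposal is that it reformulates the conjecture in fiber-theoretic language and identifies plausible intermediate targets, but it does not prove the statement, and the statement remains open.
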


Let us call three different $n$-multisets such that they are all $s$-equivalent to each other, a \textit{magical triplet}. I submit that outside of cases $s=2$ and $n=2s$ (which have been already investigated quite thoroughly) \textbf{magical triplets do not exist}---in other words, when one tries to recover a multiset from its collection of $s$-sums they will always have no more than two options to choose from.

\section{Who Is Who}
\label{sec:Who_Is_Who}

The Multiset Recovery Problem, despite its elementary nature, has attracted attention of several prominent mathematicians. I would like to honor all the contributors here by listing their (very short) bios below. I apologize in advance for any factual errors and possible incompleteness of this list.

\medskip

\textbf{Leo Moser} was born in Vienna, Austria in 1921, then brought by his parents to Canada three years later. He became professor of mathematics at University of Alberta after spending a few years in the United States (Ph.D. from University of North Carolina, then post-doc at Texas Technical College). His interests lay mostly in number theory, combinatorics and combinatorial geometry. He died in Toronto at the age of 48 from heart failure.

\smallskip

\textbf{Joachim (Jim) Lambek} was born in Leipzig, Germany in 1922. As a teenager he emigrated to England, and then, as the World War II began, he was forcibly relocated to Canada. After prison work camp he went to McGill University where he earned his M.Sc in Mathematics in 1946. Then in 1950 he completed his Ph.D under Hans Zassenhaus, concentrating his research on combinatorics and number theory. However, later he developed an interest in mathematical and computational linguistics, and he worked in general algebra, pregroups and formal languages. He retired from McGill as Professor Emeritus, and died in Montreal in 2014.

\smallskip

\textbf{Ernst Gabor Straus} was born in Munich, Germany in 1922. In 1933 his family fled from the Nazis to Palestine. After World War II was over he went to Columbia University, earning his Ph.D there in 1948. In 1950 at Institude for Advanced Studies (Princeton) he became assistant to Albert Einstein. His interests ranged quite widely, from relativity to number theory, graph theory and combinatorics. Most of his career after IAS he worked at UCLA. Professor Straus died in 1983 from heart failure.

\smallskip

\textbf{John Lewis Selfridge} was born in Ketchikan, Alaska in 1927. He worked mostly in number theory and combinatorics, getting his Ph.D from UCLA, and then working in University of Illinois at Urbana-Champaign and Northern Illinois University. He was a founder of Number Theory Foundation which named one of its prizes in his honor. He died in 2010 at the age of 83.

\smallskip

\textbf{John Albert Ewell} was born in Newellton, Louisiana, in 1928. He received both his M.S. and Ph.D in Mathematics from UCLA. His Ph.D. thesis was partially summarized in the article \cite{Ewe} about the multiset recovery problem that we have mentioned above; professor Straus was his advisor. He worked in number theory, teaching at various universities in California, Canada and Illinois. Dr. Ewell died in 2007 in Knoxville, Tennessee.

\smallskip

\textbf{Aviezri Siegmund Fraenkel} was born in Munich, in 1929. He lives in Israel and his main field of research is combinatorial game theory and computational complexity. He received his Ph.D at UCLA, where Ernst Straus was his advisor. In 2005 he was awarded Euler Prize for his contributions to combinatorics.

\smallskip

\textbf{Basil Gordon} was born in Baltimore, Maryland in 1931. He received his Ph.D in Mathematics and Physics from Cal Tech under Tom Apostol and Richard Feynman. He was then drafted into the US Army where he worked with Werner von Braun. Upon returning to academia, he specialized in number theory, algebra, and combinatorics. After working many years at UCLA, Professor Gordon retired in 1993. He died in California, in 2012.

\smallskip

\textbf{Jan Boman} was born in Sweden in 1933. He obtained his Ph.D from Stockholm University with Lars Hörmander as his thesis advisor. Currently he is Professor Emeritus at Stockholm University, and his research is mainly in the areas of integral geometry and calculus, with many papers on mathematical problems related to computerized tomography and Radon transform. 

\smallskip

\textbf{Ethan Bolker} was born in Brooklyn, New York in 1938. He is a retired Professor Emeritus at University of Massachusetts (Boston). He works in computer science, combinatorics, geometry and math education. He received his Ph.D from Harvard University in 1965 under supervision of Andy Gleason. In 1972 he became a Full Professor at UMass Boston where he worked until his retirement in 2014.

\smallskip

\textbf{Patrick Eugene O'Neil} was born in 1942, in Mineola, NY. He has done his Ph.D thesis in combinatorial mathematics at Rockefeller University (New York) in 1969; Gian-Carlo Rota was his thesis advisor. His scientific interests lied in database indexing and performance research, data warehousing and other adjacent areas of computer science. In 1988 he retired as a Professor Emeritus of Computer Science at University of Massachusetts (Bos\-ton). He died in Cambridge, Massachusetts in~2019.

\smallskip

\textbf{Oleg Izhboldin} was born in Leningrad, USSR in 1963. He received his M.Sc (1985) and Ph.D (1988) in Mathematics from Leningrad (St.Petersburg) State University, where he worked in algebraic K-theory with Alexander Merkurjev as his advisor. Professor Izhboldin received his Dr.Sc.~degree in 2000; just a few months later, during his trip to Paris, he suffered massive stroke and died at the age of~37.

\smallskip

\textbf{Dmitri Fomin} was born in Leningrad in 1965. He received his M.Sc in Mathematics (1986) from Leningrad State University and then pursued Ph.D in low-dimensional and algebraic topology under Oleg Viro. His interests cover discrete mathematics, geometry and topology, history of mathematics and science, as well as various topics in mathematical problem solving. He lives in Massachusetts and works in applied mathematics and computer science.

\smallskip

\textbf{Konstantin Kokhas} was born in 1966, in Leningrad. His Ph.D (2005) was in the field of spectral operator theory. His other interests lie in representation theory and functional analysis. He is a senior lecturer at the St.Petersburg State University, Faculty of Mathematics and Mechanics. In addition, Professor Kokhas is very active in mathematical education and mathematical contests for high school students.

\smallskip

\textbf{Tewodros Amdeberhan} was born in Ethiopia. He gained his M.Sc at Addis Ababa University, and then did his Ph.D on the WZ theory and proof theory at Temple University, Philadelphia under Doron Zeilberger in 1997. He is a Senior Professor of Practice at Tulane University, with his interests lying in combinatorics, number theory, proof theory and computer algebra.

\smallskip

\textbf{Melkamu Zeleke} was born in 1968 in Ethiopia. He received B.Sc and M.Sc degrees in mathematics from Addis Ababa University, and then gained his Ph.D. in Discrete Radon Transform, Covering Congruences, and Boolean Functions at Temple University, Philadelphia under the supervision of Doron Zeilberger in 1998. He is a Professor of Mathematics at William Paterson University of New Jersey, specializing in Enumerative Combinatorics.

\smallskip

\textbf{Svante Linusson} was born in 1969 in Göteborg, Sweden. He is currently a Professor at KTH Royal Institute of Technology. His main field of research is algebraic combinatorics, as well as other areas of discrete mathematics, both pure and applied. He is also interested in mathematical aspects of electoral systems.

\smallskip

\textbf{Javlon Isomurodov} was born in 1993, in Navoiy, Uzbekistan. He has graduated the B.Sc program at ITMO University in St.Peters\-burg, Russia in 2016. His bachelor thesis on injectivity of combinatorial Radon transform was written under Konstantin Kokhas' guidance. Currently he is pursuing his M.Sc degree in the area of computational genetics at ITMO.

\vspace{20pt}

\end{document}